\newcommand*\diff{\mathop{}\!\mathrm{d}}
\newcommand{\dx}{\ensuremath{\diff x}}
\newcommand{\dy}{\ensuremath{\diff y}}
\newcommand{\tria}{\mathcal{T}}
\newcommand{\Maxgz}{\mathcal{G}_h}
\newcommand{\frakj}{\mathfrak{j}}
\DeclareMathOperator{\dist}{dist}
\newcommand{\TheTitle}{Pointwise gradient estimate of the Ritz projection}
\newcommand{\ShortTitle}{Pointwise gradient estimate of the Ritz projection}
\newcommand{\TheAuthors}{L.~Diening, J.~Rolfes, A.J.~Salgado}
\headers{\ShortTitle}{\TheAuthors}
\title{\TheTitle}
\author{
  Lars Diening\thanks{Department of Mathematics, University of Bielefeld, Postfach 10 01 31, 33501 Bielefeld, Germany.
    (\email{lars.diening@uni-bielefeld.de}, \url{http://www.bi-discrete.com})}
  \and
  Julian Rolfes\thanks{Department of Mathematics, University of Bielefeld, Postfach 10 01 31, 33501 Bielefeld, Germany.
    (\email{julian.rolfes@uni-bielefeld.de}, \url{http://www.bi-discrete.com})}
  \and
  Abner J.~Salgado\thanks{Department of Mathematics, University of Tennessee, Knoxville, TN 37996, USA.
    (\email{asalgad1@utk.edu}, \url{https://math.utk.edu/people/abner-salgado/})}
}
\date{Draft version of \today}
\begin{document}

\maketitle 

\begin{abstract}
Let $\Omega \subset \RRn$ be a convex polytope ($n \leq 3$). The Ritz projection is the best approximation, in the $W^{1,2}_0$-norm, to a given function in a finite element space. When such finite element spaces are constructed on the basis of quasiuniform triangulations, we show a pointwise estimate on the Ritz projection. Namely, that the gradient at any point in $\Omega$ is controlled by the Hardy--Littlewood maximal function of the gradient of the original function at the same point. From this estimate, the stability of the Ritz projection on a wide range of spaces that are of interest in the analysis of PDEs immediately follows. Among those are weighted spaces, Orlicz spaces and Lorentz spaces.
\end{abstract}

\begin{keywords}
Ritz projection, gradient estimates, maximal function, Muckenhoupt weights 
\end{keywords}

\begin{AMS}
65N30, 65N80, 65N12
\end{AMS}

\section{Introduction}
\label{sec:Intro}

% \comment[Julian]{double dashes everywhere, are they correct?}
To approximate solutions of partial differential eiquations, in particular, those that are second order and elliptic, the finite element method has emerged as the method of choice. A finite element scheme is nothing but a Galerkin approximation with particular choice of finite dimensional subspace (piecewise polynomials subject to a triangulation of the domain) and a particular basis. It is fair to say that the study the properties of finite element schemes for second order linear elliptic second order equations in an energy setting has reached a state of maturity. In short the Ritz projection, that is the best approximation in the $W^{1,2}_0$-norm (see Section~\ref{sec:Notation} for notation), possesses optimal approximation properties when these are measured in the energy norm, which usually is a norm equivalent to the $W^{1,2}$-norm. This reduces the numerical analysis of a finite element scheme to a question of approximation theory, and this is usually resolved by constructing a suitable interpolant.

On the other hand, the study of the properties of the Ritz projection in non energy norms has been the subject of intensive study with many classical results, recent progresses, and still some open questions. We refer the reader to the Introductions of \cite{GuzLeyRosSch09} and \cite{DemLeySchWah12} for some historical accounts. It is fair to say that the development of this subject is obscured by technicalities, and it is far from settled. Nevertheless, apart from the intrinsic interest such estimates may present, these become important when dealing, for instance, with nonlinear or coupled problems, or even when in a linear problem the data is sufficiently rough that the functional setting that provides well--posedness is no longer the energy one, see for instance \cite{DreDurOje20}, or when the energy norm is not equivalent to the usual {$W^{1,2}$-norm}, see \cite{MR3348172}.

The purpose of this work is to make a contribution in this direction. We show that, over quasiuniform meshes, the gradient of the Ritz projection at any point in the domain is controlled by the Hardy--Littlewood maximal operator of the gradient of the original function at the same point. This pointwise estimate not only immediately implies stability of the Ritz projection in any function space where the maximal operator is bounded, but it also elucidates the action of the Ritz projection, i.e., finite element approximation. It is a sort of averaging procedure. 

Our presentation is organized as follows. In Section~\ref{sec:Notation} we introduce notation. The statement of our main result, Theorem~\ref{theorem:main}, is presented in Section~\ref{sec:Statement}. Here we also colllect a list of Corollaries. Some of these recover known results, whereas others are truly new and may find application in the finite element approximation of, for instance, nonlinear elliptic problems with nonstandard growth conditions \cite{MR2418205}. The proof of our main result is the content of Section~\ref{sec:proof-main-result}. For clarity, this proof is split in several steps that comprise the bulk of this section.

\section{Notation and preliminaries}
\label{sec:Notation}

We begin by introducing some notation and specifying the framework under which we shall operate. The relation $A \lesssim B$ means that there is a constant $c$ for which $A \leq c B$. The value of this constant may change at each occurrence. More importantly, this constant does not depend on $A$, $B$, nor discretization parameters. $A \approx B$ means that $A \lesssim B$ and $B \lesssim A$.

Throughout our work, $\Omega \subset \RRn$, $n \leq 3$, is a bounded convex polytope. While convexity is essential for our arguments, the dimensional restriction is merely an artifact of our methods. Given $x \in \RRn$, we denote its Euclidean norm by $\abs{x}$. By $B(x,r)$ we denote the open ball with center $x \in \RRn$ and radius $r>0$. For a measurable set $E \subset \RRn$ we denote by $|E|$ its Lebesgue measure. $L^0(\Omega)$ denotes the collection of functions $\Omega \to \RR$ that are measurable. For $p \in [1,\infty]$ and $k \in \mathbb{N}$ we denote by $L^p(\Omega)$  and $W^{k,p}(\Omega)$, respectively, the usual Lebesgue and Sobolev spaces. The subspace of $W^{k,p}(\Omega)$ that consists of functions vanishing on the boundary is denoted by $W^{k,p}_0(\Omega)$. We immediately notice that, whenever $w \in W^{k,p}_0(\Omega)$, its extension to $\RRn \setminus \Omega$ by zero, denoted by $\tilde w$, is such that $\tilde w \in W^{k,p}(\RRn)$. For this reason, whenever necessary, we shall make this extension by zero without explicit mention nor change of notation. By $L^1_\loc(\RRn)$ we denote the space of locally integrable functions. For $f \in L^0(\RRn)$ the (centered) Hardy--Littlewood maximal operator $M$ of $f$ is 
\begin{equation}
\label{eq:def_max_op}
  M[f](x) = \sup_{r>0} \frac1{\abs{B(x,r)}} \int_{B(x,r)} \abs{f(y)} \dy.
\end{equation}
With this notation $M[f]$ readily extends to vector valued functions. If $X$ is a normed space, we shall denote by $\| \cdot \|_X$ its norm. If this norm comes from an inner product, this will be denoted by $\skp{\cdot}{\cdot}_X$. We shall make no distinction between scalar and vector valued functions nor their spaces, as this will be clear from context. For $\alpha \in (0,1]$ we let $C^{0,\alpha}(\overline\Omega)$ denote the space of H\"older continuous functions with seminorm
\begin{equation}
\label{eq:def_holder_semi}
  \abs{f}_{C^{0,\alpha}(\overline\Omega)} = \sup_{x,y\in\overline\Omega} \frac{\abs{f(x) - f(y)} }{ \abs{x-y}^\alpha },
\end{equation}
and norm $\| f \|_{C^{0,\alpha}(\overline\Omega)} = \| f \|_{L^\infty(\Omega)} + \abs{f}_{C^{0,\alpha}(\overline\Omega)}$.

Let $\mathbb{T} = \{ \tria_h\}_{h>0}$ be a quasiuniform family of conforming triangulations of $\Omega$ where, for $h>0$, the triangulation $\tria_h$ has mesh size $h$. For $k \in \mathbb{N}$ we denote by
\[
  \mathcal{L}^1_k(\tria_h) = \left\{ w_h \in C(\overline\Omega) : w_{h|T} \in \mathbb{P}_k , \ \forall T \in \tria_h \right\}
\]
the Lagrange space of degree $k$, where $\mathbb{P}_k$ is the space of polynomials of degree at most $k$. We set $V_h = \mathcal{L}_k^1(\tria_h) \cap W^{1,1}_0(\Omega)$ and immediately observe that $V_h \subset W^{1,\infty}_0(\Omega)$. The Ritz projection $R_h : W^{1,1}_0(\Omega) \to V_h$ is defined by
\begin{equation}
\label{eq:def_ritz}
  \skp{ \nabla R_h u}{ \nabla \phi_h }_{L^2(\Omega)} = \skp{ \nabla u}{ \nabla \phi_h }_{L^2(\Omega)}, \qquad \forall \phi_h \in V_h.
\end{equation}
We comment that this mapping is the orthogonal projections onto $V_h$ with respect to the $W^{1,2}_0(\Omega)$-seminorm. The following local error estimate for $R_h$ can be found in \cite[Theorem 1]{DemLeySchWah12}. In fact, it holds for more general families of triangulations than quasiuniform ones.

\begin{proposition}[local error estimate]
\label{proposition:DemlowSchatz}
Let $w \in W^{1,\infty}_0(\Omega)$ and $\mathbb{T}$ be a quasiuniform family of triangulations of $\Omega$. Let $z \in \Omega$ and $h>0$. Define $D = \Omega \cap B_d(z)$ with $d \geq k_0 h$, where $k_0$ is sufficiently large. We have, for every $w_h \in V_h$,
\[
  | \nabla (w - R_h w )(z) |\lesssim \| \nabla (w - w_h) \|_{L^\infty(D)} + d^{-1} \| w - w_h \|_{L^\infty(D)} + d^{-\tfrac{n}2-1} \| w - R_h w \|_{L^2(D)},
\]
where the implicit constant is independent of $w$, $h$, and $z$.
\end{proposition}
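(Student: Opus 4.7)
The plan is to follow the classical Schatz--Wahlbin strategy for pointwise error estimates, as refined by the weighted-norm machinery of \cite{DemLeySchWah12}. Fix a unit vector $e$, write $\partial_e = e\cdot\nabla$, and note that by the triangle inequality
\[
  \abs{\partial_e(w-R_h w)(z)} \leq \|\nabla(w-w_h)\|_{L^\infty(D)} + \abs{\partial_e(R_h w - w_h)(z)},
\]
so it suffices to control the second term for an arbitrary $w_h \in V_h$. The key tool is a \emph{regularized discrete Green's function} for $\partial_e$ at $z$: I would choose $\tilde\delta_z \in C_c^\infty(B(z, c_0 h))$ so that $\int p\,\tilde\delta_z = p(z)$ for every $p \in \mathbb{P}_k$, let $G \in W^{1,2}_0(\Omega)$ solve $-\Delta G = \partial_e \tilde\delta_z$, and set $g_h = R_h G \in V_h$. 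Combining the approximation property of $\tilde\delta_z$ on $V_h$ with \eqref{eq:def_ritz} yields, modulo higher-order corrections absorbable via standard inverse estimates,
\[
  \abs{\partial_e(R_h w - w_h)(z)} \approx \bigl|\skp{\nabla(R_h w - w_h)}{\nabla g_h}_{L^2(\Omega)}\bigr| = \bigl|\skp{\nabla(w - w_h)}{\nabla g_h}_{L^2(\Omega)}\bigr|,
\]
where the last equality is a second application of Galerkin orthogonality. The problem reduces to estimating this single duality pairing.

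The central technical step, and the main obstacle, is to establish that $g_h$ decays away from $z$ at essentially the rate of the continuous Green's function for $\partial_e \delta_z$. With $\sigma(x) = \max\{\abs{x-z}, h\}$, the target is the $L^2$-weighted bound $\|\sigma^{n/2+1}\nabla g_h\|_{L^2(\Omega)} \lesssim 1$, whose annular consequences include
\[
  \|\nabla g_h\|_{L^1(B_\rho(z))} \lesssim 1, \qquad \|\nabla g_h\|_{L^2(\Omega\setminus B_\rho(z))} \lesssim \rho^{-n/2-1}, \qquad \rho \geq k_0 h.
\]
These are proved by decomposing $\Omega$ into dyadic annuli $A_j = \{2^j h \leq \abs{x-z} \leq 2^{j+1} h\}$ and iterating a Caccioppoli-type argument across scales: on each annulus, test the discrete equation for $g_h$ against $\eta_j^2 g_h$ for a smooth cutoff $\eta_j$ of width $d_j = 2^j h$, and absorb the defect caused by $\eta_j^2 g_h \notin V_h$ using the superapproximation estimate
\[
  \|\eta_j^2 g_h - I_h(\eta_j^2 g_h)\|_{W^{1,2}} \lesssim (h/d_j)\,\|g_h\|_{W^{1,2}(\mathrm{supp}\,\eta_j)},
\]
with $I_h$ a suitable Scott--Zhang type interpolant. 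Convexity of $\Omega$ enters through the $H^2$-regularity used to seed the iteration from a global $L^2$ control on $G$; carefully tracking how the Caccioppoli remainder propagates across adjacent annuli is what forces the hypothesis $d \geq k_0 h$ with $k_0$ sufficiently large.

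With these weighted estimates available, split $\skp{\nabla(w - w_h)}{\nabla g_h}_{L^2(\Omega)}$ into integrals over $D$ and over $\Omega\setminus D$. Over $D$, H\"older's inequality combined with $\|\nabla g_h\|_{L^1(D)} \lesssim 1$ produces the first term $\|\nabla(w-w_h)\|_{L^\infty(D)}$. Over $\Omega\setminus D$, an elementwise integration by parts yields a boundary contribution on $\partial D$ controlled via $\|\nabla g_h\|_{L^\infty(\partial D)} \lesssim d^{-n}$ and $\abs{\partial D} \sim d^{n-1}$ by $d^{-1}\|w - w_h\|_{L^\infty(D)}$, accounting for the second term. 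The remaining bulk contribution on $\Omega\setminus D$ is treated by dyadic annular summation and Cauchy--Schwarz against the weighted $L^2$ decay of $\nabla g_h$; one final use of the Galerkin orthogonality $\skp{\nabla(R_h w - w)}{\nabla\phi_h}_{L^2(\Omega)} = 0$ for $\phi_h \in V_h$ converts the unknown from $w - w_h$ to $w - R_h w$ and, via the best-approximation property of $R_h$ on each annulus, localizes the result to $D$, producing the pollution term $d^{-n/2-1}\|w - R_h w\|_{L^2(D)}$. Infimizing over $w_h \in V_h$ and taking the supremum over $e$ completes the argument.
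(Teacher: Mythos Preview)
The paper does not actually prove this proposition: its entire argument is to cite \cite[Theorem~1]{DemLeySchWah12} and observe that, on inspecting that proof, the point $z$ need not be the maximizer of $\|\nabla(w-R_hw)\|_{L^\infty(\Omega)}$. Your proposal, by contrast, attempts to reconstruct the underlying Schatz--Wahlbin argument. Through the discrete Green's function and its weighted decay via superapproximation and dyadic Caccioppoli iteration, you are assembling the correct machinery, and up to the near-field bound your outline tracks the standard proof.

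The weak spot is your treatment of the contribution on $\Omega\setminus D$ and the origin of the $L^2$ pollution term. The global pairing $\skp{\nabla(w-w_h)}{\nabla g_h}_{L^2(\Omega)}$ depends on $w-w_h$ over all of $\Omega$, whereas the target estimate involves only norms on $D$; your proposed remedy---elementwise integration by parts on $\Omega\setminus D$ followed by ``one final use of Galerkin orthogonality'' to convert $w-w_h$ into $w-R_hw$ and then localize via annular best approximation---does not accomplish this. Galerkin orthogonality is a single global identity, already spent in passing from $R_hw-w_h$ to $w-w_h$, and it cannot transform a far-field integral of $w-w_h$ into a \emph{local} $L^2$-norm of $w-R_hw$ on $D$. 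In the actual argument the pollution term $d^{-n/2-1}\|w-R_hw\|_{L^2(D)}$ does not come from a far-field piece of the duality pairing at all; it arises from a discrete interior (Caccioppoli-type) estimate applied to $R_hw-w_h\in V_h$, whose local discrete equation has source $w-w_h$ restricted to $D$. The whole argument is local from the outset, and no integral over $\Omega\setminus D$ ever appears.
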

\begin{proof}
As mentioned before, this is essentially \cite[Theorem 1]{DemLeySchWah12}. However in that result, as stated, the point $z$ is where $\| \nabla(w - R_h w)\|_{L^\infty(\Omega)}$ is attained. One merely needs to examine the proof to see that this point may be arbitrary.
\end{proof}

\section{Statement of the main result and corollaries}
\label{sec:Statement}

We are now in position to state the main result of our work.

\begin{theorem}[pointwise estimate]
\label{theorem:main}
Let $\Omega \subset \RRn$, for $n \in\{2,3\}$, be a convex polytope and $\mathbb{T} = \{\tria_h\}_{h>0}$ be a family of conforming and quasiuniform triangulations of $\Omega$. For every $u \in W^{1,1}_0(\Omega)$ and almost every $z \in \Omega$ we have
\begin{equation}
\label{eq:mainResult}
  |\nabla R_h u(z)| \lesssim M[\nabla u](z),
\end{equation}
where the implied constant is independent of $z$, $u$, and $h$, and depends on $\mathbb{T}$ only through its shape regularity constants.
\end{theorem}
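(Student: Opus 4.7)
My plan is to apply the local error estimate of Proposition~\ref{proposition:DemlowSchatz} at the point $z$ with the smallest admissible radius $d = k_0 h$, and bound each of the three resulting terms by $M[\nabla u](z)$. By density (approximating $u$ by smooth functions and passing to the limit using the continuity of $R_h$ on $W^{1,1}_0(\Omega)$ and the lower semicontinuity of $M$), one may assume $u$ is smooth. At almost every $z \in \Omega$ (a Lebesgue point of $\nabla u$) one has $\abs{\nabla u(z)} \leq M[\nabla u](z)$, so the triangle inequality $\abs{\nabla R_h u(z)} \leq \abs{\nabla u(z)} + \abs{\nabla(u - R_h u)(z)}$ reduces the problem to showing $\abs{\nabla(u - R_h u)(z)} \lesssim M[\nabla u](z)$.

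With $D = \Omega \cap B(z, k_0 h)$, the interpolant $w_h \in V_h$ in Proposition~\ref{proposition:DemlowSchatz} needs to approximate $u$ on $D$ while only seeing averages of $u$ at scale $h$. A natural candidate is a standard quasi-interpolant (Scott--Zhang or Cl\'ement) applied to a mollification $\tilde u = u * \rho_{ch}$ of the zero-extended $u$. The key observation is that $\abs{\nabla \tilde u(y)} \lesssim M[\nabla u](z)$ uniformly for $y \in B(z, 2 k_0 h)$: the convolution gradient is literally an average of $\abs{\nabla u}$ over a ball of radius $ch$ contained in a ball of comparable volume centered at $z$. Standard local $L^\infty$ stability and $O(h)$ approximation of the quasi-interpolant, together with a pointwise Poincar\'e-type control of $u - \tilde u$ by the maximal function, should then bound
\[
\| \nabla(u - w_h) \|_{L^\infty(D)} + h^{-1}\| u - w_h \|_{L^\infty(D)} \lesssim M[\nabla u](z).
\]

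The hardest term is $h^{-n/2-1}\| u - R_h u \|_{L^2(D)}$, which requires a local $L^2$ estimate for the Ritz projection error on a ball of radius $\sim h$. I would obtain this by a dyadic decomposition of $\Omega$ into annuli $A_j = B(z, 2^{j+1} k_0 h) \setminus B(z, 2^j k_0 h)$ around $z$, and Caccioppoli / super-approximation local energy estimates of Schatz--Wahlbin type for the discretely harmonic function $u - R_h u$. These propagate bounds between adjacent rings, terminated on the outermost scale by a global $L^2$ duality estimate relying on the $H^2$-regularity afforded by the convexity of $\Omega$. Geometric decay between scales should then collapse the dyadic sums into a constant multiple of $M[\nabla u](z)$.

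I expect the main obstacle to lie in this $L^2$-localization step. Since $R_h$ is nonlocal, the error on the small ball $D$ depends on $u$ across the entire domain, and the dyadic propagation must be carried out carefully so as to avoid logarithmic losses across the $O(\log(1/h))$ scales, and so that the dyadic averages of $\abs{\nabla u}$ appearing on distant rings can be \emph{collectively} dominated by the single pointwise quantity $M[\nabla u](z)$.
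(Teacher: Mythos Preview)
Your plan to apply Proposition~\ref{proposition:DemlowSchatz} directly to $u$ at scale $d=k_0 h$ does not work, and the obstruction appears already in term~$\mathrm{I}$, not only in the $L^2$ term you flagged. For any choice of $w_h\in V_h$ the quantity $\|\nabla(u-w_h)\|_{L^\infty(D)}$ is bounded below by the best $L^\infty$--approximation of $\nabla u$ by piecewise polynomials on $D$, and this is \emph{not} controlled by $M[\nabla u](z)$: think of $\nabla u$ carrying a narrow spike of height $N$ and width $\epsilon\ll h$ inside $D$, which contributes only $O(N(\epsilon/h)^n)$ to the maximal function at $z$ but forces $\|\nabla(u-w_h)\|_{L^\infty(D)}\approx N$. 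Your mollification $\tilde u=u*\rho_{ch}$ does satisfy $|\nabla\tilde u|\lesssim M[\nabla u](z)$ on a ball of radius $\sim h$ around $z$, but the remaining piece $\|\nabla(u-\tilde u)\|_{L^\infty(D)}$ is exactly the uncontrollable part. The same defect recurs in term~$\mathrm{III}$: terminating the dyadic recursion with a global Aubin--Nitsche bound yields $\|\nabla u\|_{L^2(\Omega)}$, not $M[\nabla u](z)$, and local energy estimates alone supply no mechanism to dominate the far-field contributions by a single pointwise maximal value.

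The paper circumvents both issues by never applying Proposition~\ref{proposition:DemlowSchatz} to $u$. Instead it writes
\[
  \partial_l R_h u(z)=\skp{\delta_z}{\partial_l u}_{L^2(\Omega)}+\skp{\nabla(R_h g_z-g_z)}{\nabla u}_{L^2(\Omega)},
\]
where $g_z$ is the regularized Green's function \eqref{eq:def:reg:green}. The second term is split with a radially decreasing weight $\varphi_{h,z}$, so that $\|\varphi_{h,z}\,\nabla u\|_{L^1(\Omega)}\lesssim M[\nabla u](z)$ by the convolution estimate of Lemma~\ref{lemma:convolution}. All of the approximation theory---the local error estimate, the dyadic decomposition, interpolation, and duality---is then carried out on $g_z$, not on $u$. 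The required regularity of $g_z$ on each dyadic annulus comes from the H\"older estimates on $\nabla G$ and $\nabla_x\nabla_\xi G$ of Proposition~\ref{proposition:HolderOnGreen}, which is precisely where convexity and the dimensional restriction enter. In short, the key idea you are missing is to dualize: move the local error estimate from $u$ (which has no usable pointwise regularity) to the discrete Green's function (which does).
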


Before we embark in the proof of this result we immediately mention that it implies the stability of the Ritz projection in any space where the Hardy--Littlewood maximal operator is bounded. For the sake of completeness we present a, far from exhaustive, list of examples: (weighted) $L^p$ spaces, see \S\ref{sub:wLp}, Lorentz spaces, \S\ref{sub:Lorentz} and \S\ref{sub:wLorentz}; Orlicz spaces, \S\ref{sub:Orlicz}; functions of bounded mean oscillation \S\ref{sub:BMO}, and (weighted) variable exponent spaces \S\ref{sub:wLpx}.

\subsection{Lorentz spaces}
\label{sub:Lorentz}
Let $\mu$ be a measure on $\Omega$, $p \in [1,\infty)$, and $q \in [1,\infty]$. The Lorentz spaces are defined as
\[
  L^{p,q}(\mu,\Omega) = \left\{ f \in L^0(\mu,\Omega) : \| f \|_{L^{p,q}(\mu,\Omega)}< \infty \right\},
\]
where
\begin{equation}
\label{eq:LorentzNorm}
  \| f \|_{L^{p,q}(\mu,\Omega)} = \begin{dcases}
    \left( q \int_0^\infty t^{q} \mu_f(t)^{q/p}  \frac{\diff t}{t} \right)^{1/q} , & q < \infty, \\
    \sup_{t>0} t \mu_f(t)^{1/p}, & q = \infty,
  \end{dcases}
\end{equation}
and
\[
  \mu_f(t) = \mu\left(  \{ x \in \Omega: \abs{f(x)} > t \} \right)
\]
is the distribution function of $f$. We recall that, for $p \in [1,\infty)$, $L^{p,p}(\mu, \Omega) = L^p(\mu,\Omega)$ with equivalence of norms \cite[Proposition 1.4.5]{MR3243734}. Finally, if $\mu$ is the Lebesgue measure, we simply denote these spaces by $L^{p,q}(\Omega)$.

\begin{corollary}[Lorentz stability]
\label{cor:LorentzStability}
In the setting of of Theorem~\ref{theorem:main} assume, in addition, that $p \in (1, \infty)$ and $q \in (1,\infty]$, or that $p=1$ and $q=\infty$. Then we have
\[
  \| \nabla R_h u \|_{L^{p,q}(\Omega)} \lesssim \| \nabla u \|_{L^{p,q}(\Omega)},
\]
where the implicit constant is independent of $u$ and $h$. In particular, for $p \in (1,\infty]$, we have
\[
  \| \nabla R_h u \|_{L^{p}(\Omega)} \lesssim \| \nabla u \|_{L^{p}(\Omega)}.
\]
\end{corollary}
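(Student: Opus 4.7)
The plan is to combine the pointwise bound of Theorem~\ref{theorem:main} with the classical mapping properties of the Hardy--Littlewood maximal operator on Lorentz spaces. Since \eqref{eq:mainResult} gives $|\nabla R_h u(z)| \lesssim M[\nabla u](z)$ for almost every $z \in \Omega$, and the Lorentz quasinorm in \eqref{eq:LorentzNorm} is monotone with respect to the (a.e.) pointwise order of the absolute values, I immediately reduce the claim to proving
\[
  \| M[\nabla u] \|_{L^{p,q}(\Omega)} \lesssim \| \nabla u \|_{L^{p,q}(\Omega)}.
\]

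To handle the maximal operator I would extend $\nabla u$ by zero outside $\Omega$, as noted in Section~\ref{sec:Notation}. This preserves the Lorentz quasinorm and dominates $M[\nabla u]$ on $\Omega$ by the maximal function of the extension on $\RRn$, so it suffices to invoke $M : L^{p,q}(\RRn) \to L^{p,q}(\RRn)$. This mapping property is standard: for $p \in (1,\infty)$ and $q \in [1,\infty]$ it follows from Marcinkiewicz interpolation applied between the weak-type $(1,1)$ endpoint and the trivial $L^\infty \to L^\infty$ bound of $M$, while the remaining case $p=1$, $q=\infty$ is precisely the weak-$(1,1)$ inequality for $M$.

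For the second inequality, I would use the identification $L^{p,p}(\Omega) = L^p(\Omega)$ (with equivalence of norms) for $p \in (1,\infty)$, reducing it to a special case of the first one. The endpoint $p = \infty$ is immediate from $M : L^\infty \to L^\infty$ applied directly to \eqref{eq:mainResult}.

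There is really no substantial obstacle here; the entire content of the corollary has been concentrated in Theorem~\ref{theorem:main}, and what remains is to quote the correct boundedness statement for $M$ on $L^{p,q}$ and to keep track of the endpoint exclusions (no $q=1$ when $p=1$ other than the $L^{1,\infty}$ weak bound, and exclusion of the unbounded case $p=1$, $q<\infty$). The main bookkeeping point is verifying that extension by zero does not inflate any of the quantities, which is clear since $\nabla u$ vanishes off $\Omega$.
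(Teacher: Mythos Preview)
Your approach is exactly the paper's: apply the pointwise bound from Theorem~\ref{theorem:main} and then invoke the boundedness of the Hardy--Littlewood maximal operator on the relevant Lorentz space. For $p\in(1,\infty)$ the paper cites a reference rather than spelling out Marcinkiewicz, but the content is the same, and the $L^p$ consequence is handled identically via $L^{p,p}=L^p$.

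There is, however, a slip at the endpoint $p=1$, $q=\infty$. You say this case ``is precisely the weak-$(1,1)$ inequality for $M$'', but the weak-$(1,1)$ inequality is the statement $M:L^1\to L^{1,\infty}$; it controls $\|M[\nabla u]\|_{L^{1,\infty}}$ by $\|\nabla u\|_{L^1}$, not by $\|\nabla u\|_{L^{1,\infty}}$. What the corollary asserts is $\|\nabla R_hu\|_{L^{1,\infty}}\lesssim\|\nabla u\|_{L^{1,\infty}}$, which via \eqref{eq:mainResult} would require the stronger mapping $M:L^{1,\infty}\to L^{1,\infty}$. The paper does not reduce this to weak-$(1,1)$; it invokes that stronger boundedness directly, with a citation. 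So your justification for the endpoint needs to be replaced by an appeal to that result rather than to the weak-type bound.
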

\begin{proof}
Consider first the case $p=1$ and $q=\infty$. Owing to, for instance, \cite[Theorem 2.1.6]{MR3243734} we have $M: L^{1,\infty} \to L^{1,\infty}$ boundedly.

For $p>1$ it suffices to invoke \cite[Theorems A, Section 5.2]{Krbec}.
\end{proof}

We comment that the boundedness of the Ritz projection in $L^p$ spaces, has been already presented in \cite{RanSco82,GuzLeyRosSch09,DemLeySchWah12}. Thus, the case $p \in (1,\infty)$ of Corollary~\ref{cor:LorentzStability} can also be obtained by the Marcinkiewicz interpolation theorem as presented in \cite[Theorem 5.3.2]{MR0482275}.

\subsection{Functions of bounded mean oscillation}
\label{sub:BMO}
Let us now present a truly new result. We recall that the (local) sharp maximal operator is defined as
\[
  M^\sharp[f](x) = \sup_{r>0} \int_{B(x,r) \cap \Omega} \abs{ f(y) - \langle f \rangle_{B(x,r) \cap \Omega} } \dy, \qquad \langle f \rangle_{B} = \frac1{|B|} \int_B f(y) \dy.
\]
With its aid we define the space of functions of bounded mean oscillation
\[
  \setBMO(\Omega) = \left\{ f \in L^0(\Omega): M^\sharp[f] \in L^\infty(\Omega) \right\}, \qquad \| f \|_{\setBMO(\Omega)} = \left\| M^\sharp[f] \right\|_{L^\infty(\Omega)}.
\]
We remark that $L^\infty(\Omega) \subsetneq \setBMO(\Omega) \subsetneq L^p(\Omega)$ for all $p \in [1,\infty)$. We refer to \cite[Proposition 3.1.2(2) and Example 3.1.3]{MR3243741} for the first inclusion, and \cite[Corollary 3.1.8]{MR3243741} for the second one.

\begin{corollary}[BMO stability]
\label{cor:BMOStability}
Under the assumptions of Theorem~\ref{theorem:main} we have that
\[
  \| \nabla R_h u \|_{\setBMO(\Omega)} \lesssim \| \nabla u \|_{\setBMO(\Omega)},
\]
where the implicit constant is independent of $u$ and $h$.
\end{corollary}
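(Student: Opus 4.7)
The plan is to reduce the BMO estimate to the pointwise bound of Theorem~\ref{theorem:main} via the projection property of $R_h$. Since $V_h \subset W^{1,1}_0(\Omega)$ is invariant under $R_h$, for every $v_h \in V_h$ one has $R_h u - v_h = R_h(u - v_h)$, so Theorem~\ref{theorem:main} applied to $u - v_h$ yields
\[
  |\nabla R_h u(z) - \nabla v_h(z)| \lesssim M[\nabla u - \nabla v_h](z) \qquad \text{for a.e.\ } z \in \Omega.
\]
Using the triangle inequality, the BMO oscillation of $\nabla R_h u$ on any $B \cap \Omega$ is bounded by $2 \langle M[\nabla u - \nabla v_h] \rangle_{B \cap \Omega} + 2 \langle |\nabla v_h - c_B| \rangle_{B \cap \Omega}$ for any constant vector $c_B$; the task thus reduces to choosing $v_h \in V_h$ and $c_B$, adapted to each ball $B = B(x_0, r)$, so that both terms are controlled by $\|\nabla u\|_{\setBMO(\Omega)}$.

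For each such $B$ we set $c_B := \langle \nabla u \rangle_{B^* \cap \Omega}$ with $B^*$ a fixed dilate of $B$, and take $v_h := I_h w$ where $I_h$ denotes a Scott--Zhang quasi-interpolant and $w(x) := \eta(x)\, c_B \cdot (x - x_0)$, with $\eta$ a smooth cutoff equal to $1$ on $B^*$, supported in a further dilate, and truncated near $\partial \Omega$ so that $w \in W^{1,\infty}_0(\Omega)$. Because $I_h$ reproduces affine functions on element patches, whenever $r \gtrsim h$ one has $\nabla v_h \equiv c_B$ on every element whose patch lies in $B^*$; in particular $\nabla v_h = c_B$ on $B \cap \Omega$, which makes the second term vanish.

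For the maximal function average, split $g := \nabla u - \nabla v_h$ as $g_1 + g_2$ with $g_1 := g \chi_{2B^*}$. On $2B^* \cap \Omega$, $g_1$ agrees with $\nabla u - c_B$ away from the cutoff transition, and John--Nirenberg supplies $\|\nabla u - c_B\|_{L^2(2B^* \cap \Omega)} \lesssim |B^*|^{1/2}\,\|\nabla u\|_{\setBMO(\Omega)}$; combined with the $L^2$-boundedness of $M$ and Cauchy--Schwarz, this gives $\langle M[g_1] \rangle_{B \cap \Omega} \lesssim \|\nabla u\|_{\setBMO(\Omega)}$. The outer part $M[g_2](z)$ for $z \in B \cap \Omega$ only involves averages over balls of radius comparable to or larger than the radius of $B^*$; a dyadic decomposition over the shells $2^k B^* \setminus 2^{k-1} B^*$ combined with the standard bound $|\langle \nabla u \rangle_{2^k B^*} - c_B| \lesssim k\,\|\nabla u\|_{\setBMO(\Omega)}$ yields a convergent geometric series.

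The complementary regimes are handled by routine modifications: when $r \lesssim h$ an inverse estimate on $\nabla R_h u|_T$ reduces matters to the pointwise bound on the relevant elements, and when $B$ meets $\partial \Omega$ the cutoff $\eta$ is replaced by one adapted to the boundary. The main obstacle is the transition region of the cutoff: the $\setBMO$-seminorm controls oscillations but not the mean $|c_B|$ of $\nabla u$, so the construction must be calibrated so that the unavoidable $|c_B|$ contribution on the cutoff annulus telescopes with the far-field BMO estimate rather than producing an uncontrolled $L^1$-mean of $\nabla u$.
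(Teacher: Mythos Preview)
The paper's argument is a single sentence: invoke the theorem of Bennett--DeVore--Sharpley that the Hardy--Littlewood maximal operator is bounded on $\setBMO$, and combine this with the pointwise inequality $\abs{\nabla R_h u}\lesssim M[\nabla u]$ from Theorem~\ref{theorem:main}. Your route is entirely different---you attempt to bound the oscillation of $\nabla R_h u$ on each ball directly---and it does not close.

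The gap is exactly where you flag it but do not resolve it. Because $v_h$ must lie in $V_h\subset W^{1,1}_0(\Omega)$, it has to vanish away from the support of the cutoff, so on most of $\Omega$ one has $g=\nabla u-\nabla v_h=\nabla u$. Hence for $z\in B$ the far piece $M[g_2](z)$ is essentially $\sup_{\rho\gtrsim r}\langle\abs{\nabla u}\rangle_{B(z,\rho)}$, and this quantity is \emph{not} controlled by $\norm{\nabla u}_{\setBMO}$: it carries the full size of the averages $\langle\nabla u\rangle_{2^kB^*}$, which by the standard telescoping bound equal $c_B$ up to an error $O(k)\,\norm{\nabla u}_{\setBMO}$. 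So you pick up $\abs{c_B}$ (the very term you single out as ``the main obstacle'') plus a factor $\log(\diam(\Omega)/r)$; after restricting to $r\gtrsim h$ the latter becomes $\log(1/h)$, which is not uniform in $h$. The same $\abs{c_B}$ contamination appears in $g_1$ on the transition annulus, where $\nabla v_h$ is not $c_B$ but a quantity of size $\abs{c_B}$, so the John--Nirenberg step does not give what you claim there either. Saying the construction ``must be calibrated so that the $\abs{c_B}$ contribution telescopes'' is a statement of hope, not an argument; no mechanism for this cancellation is supplied, and none is apparent within the scheme you describe.

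It is worth noting, in fairness, that the paper's one-line proof is itself not entirely transparent: the implication ``$\abs{f}\le g$ pointwise and $g\in\setBMO$ $\Rightarrow$ $f\in\setBMO$ with comparable seminorm'' fails in general, since $\setBMO$ is not a lattice. So some care beyond the bare citation is genuinely needed, and your instinct to exploit the projection identity $R_h u-v_h=R_h(u-v_h)$ to subtract off a discrete function before applying Theorem~\ref{theorem:main} is a reasonable one. But as written the argument is incomplete, and the missing piece is not a routine modification.
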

\begin{proof}
It suffices to recall that, owing to \cite[Theorem 4.2]{MR621018} or \cite[Theorem VII.2.1]{MR2059284}, the Hardy--Littlewood maximal operator is bounded in $\setBMO(\Omega)$.
\end{proof}

\subsection{Orlicz spaces}
\label{sub:Orlicz}
Another new result is stability in Orlicz spaces. We say that $\varphi : (0, \infty)  \to (0,\infty)$ is an Orlicz function if it is nonnegative, increasing, and
\[
  \varphi(0+) = \lim_{t \downarrow 0} \varphi(t)  = 0, \qquad \varphi(\infty) = \lim_{t \to \infty}\varphi(t) = \infty.
\]
If $\varphi$ is an Orlicz function and, in addition, it is convex and satisfies
\[
  \lim_{t \downarrow 0} \frac{\varphi(t)}t = \lim_{t \to \infty} \frac{t}{\varphi(t)} = 0,
\]
then we say that it is an N--function.

For an N--function $\varphi$, we define its corresponding Orlicz space as
\begin{align*}
  L^\varphi(\Omega) &= \left\{ f \in L^0(\Omega): \| f \|_{L^\varphi(\Omega)} < \infty \right\}, \\
  \| f \|_{L^\varphi(\Omega)} &= \inf_{\lambda>0} \left\{ \int_\Omega \varphi\left( \frac1\lambda |f(x)| \right) \dx \leq 1 \right\}.
\end{align*}
We refer the reader to \cite{MR0126722} for further properties of such spaces.

Given an N--function $\varphi$, we say that $\varphi \in \nabla_2$ if there exists $a>1$ such that
\[
  \varphi(t) \leq \frac{1}{2a} \varphi(a t), \qquad \forall t\geq 0.
\]

\begin{corollary}[Orlicz stability]
\label{cor:OrliczStability}
In the setting of Theorem~\ref{theorem:main} let $\varphi \in \nabla_2$. Then
\[
  \| \nabla R_h u \|_{L^\varphi(\Omega)} \lesssim \| \nabla u \|_{L^\varphi(\Omega)},
\]
where the implicit constant is independent of $u$ and $h$.
\end{corollary}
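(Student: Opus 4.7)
The plan is to bootstrap the pointwise estimate of Theorem~\ref{theorem:main} to an Orlicz norm inequality via the known boundedness of the Hardy--Littlewood maximal operator on $L^\varphi(\Omega)$ under the $\nabla_2$ hypothesis. The argument is essentially a one-line chain of inequalities, so the work is really in invoking and citing the right result.

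First I would apply Theorem~\ref{theorem:main} componentwise and pointwise: for a.e.\ $z \in \Omega$,
\[
  |\nabla R_h u(z)| \lesssim M[\nabla u](z).
\]
Since $L^\varphi(\Omega)$ is a Banach function lattice (the Luxemburg norm is monotone with respect to the pointwise ordering of absolute values), taking $\|\cdot\|_{L^\varphi(\Omega)}$ on both sides yields
\[
  \|\nabla R_h u\|_{L^\varphi(\Omega)} \lesssim \|M[\nabla u]\|_{L^\varphi(\Omega)}.
\]

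Next I would invoke the fact that, for an N--function $\varphi$ satisfying the $\nabla_2$ condition, the centered Hardy--Littlewood maximal operator is bounded on $L^\varphi(\Omega)$, that is,
\[
  \|M[f]\|_{L^\varphi(\Omega)} \lesssim \|f\|_{L^\varphi(\Omega)}.
\]
This is classical; it is equivalent to the complementary N--function $\varphi^\ast$ satisfying $\Delta_2$ and can be found, for instance, in the work of Kokilashvili--Krbec or in standard monographs on Orlicz and Musielak--Orlicz spaces. Applied to $f = \nabla u$ (componentwise) this completes the proof.

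The only potential obstacle is a bookkeeping one, namely ensuring that the implicit constants depend only on the data quoted in the statement (the N--function through its $\nabla_2$ constant, the domain, and the shape regularity of $\mathbb{T}$), and not on $h$ or $u$. Both the pointwise estimate of Theorem~\ref{theorem:main} and the maximal function bound on $L^\varphi$ are uniform in $u$, so this is automatic. I do not anticipate any genuine analytic difficulty beyond this citation.
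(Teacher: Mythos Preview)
Your proposal is correct and follows essentially the same route as the paper: apply the pointwise bound of Theorem~\ref{theorem:main}, then invoke the classical boundedness of the Hardy--Littlewood maximal operator on $L^\varphi(\Omega)$ for $\varphi \in \nabla_2$ (the paper cites Kokilashvili--Krbec for this). There is nothing to add.
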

\begin{proof}
According to \cite[{Theorem 1.2.1(v)}]{Krbec}, if $\varphi \in \nabla_2$, then the maximal function is bounded on $L^\varphi(\Omega)$. Apply Theorem~\ref{theorem:main} to conclude.
\end{proof}

\begin{remark}[Simonenko indices]
Given an N--function $\varphi$ define
\[
  h_\varphi(\lambda) = \sup_{t >0} \frac{ \varphi(\lambda t) }{\varphi(t)}, \qquad \lambda >0.
\]
The upper and lower Simonenko indices of $\varphi$ are, respectively,
\[
  p_\varphi^- = \lim_{\lambda \downarrow 0} \frac{ \log h_\varphi(\lambda) }{\log \lambda}, \qquad
  p_\varphi^+ = \lim_{\lambda \to \infty } \frac{ \log h_\varphi(\lambda) }{\log \lambda}.
\]
We comment that $\varphi \in \nabla_2$ implies $p_\varphi^->1$ so that the condition in Corollary~\ref{cor:OrliczStability} is consistent with the results of Corollary~\ref{cor:LorentzStability}.

On the other hand, we say that an N--function is power-like if $p_\varphi^+ < \infty$. According to \cite{MR0199696}, see also \cite{MR1838797}, the space $L^\varphi(\Omega)$ is an intermediate space between $L^p(\Omega)$ and $L^q(\Omega)$ provided the Simonenko indices satisfy
\[
  1 \leq p \leq p_\varphi^- \leq p_\varphi^+ \leq q \leq \infty.
\]
Thus, in the case of $\varphi \in \nabla_2$ and power-like, the results of Corollary~\ref{cor:OrliczStability} could be obtained by interpolation. Since, however, we are not assuming $p_\varphi^+ < \infty$, this is truly a new result.
\end{remark}

\subsection{Muckenhoupt weighted spaces}
\label{sub:wLp}
Next we extend the results of \cite{DreDurOje20} to the optimal range of indices. We recall that a function $0 \leq \omega \in L^1_\loc(\Omega)$ is called a weight. For $p \in [1,\infty)$ we say that a weight $\omega$ belongs to the Muckenhoupt class $\mathcal{A}_p$ if
\[
  [\omega]_{\mathcal{A}_p} = \begin{dcases}
                              \sup_{Q} \left( \frac1{|Q|} \int_Q \omega(x) \dx \right)\left( \frac1{|Q|} \int_Q \omega^{-\tfrac1{p-1}}(x) \dx \right)^{p-1},& p >1, \\
                              \sup_{Q} \left( \frac1{|Q|} \int_Q \omega(x) \dx \right) \| \omega^{-1} \|_{L^\infty(Q)},& p = 1,
                             \end{dcases}
\]
where the supremum is over all cubes $Q$ with sides parallel to the coordinate axes. Weighted Lebesgue spaces are defined, for $p \in (1,\infty)$ and $\omega \in \mathcal{A}_p$, as
\begin{align*}
  L^p(\omega,\Omega) &= \left\{ f \in L^0(\Omega) : \| f \|_{L^p(\omega,\Omega)}< \infty \right\}, \\ 
  \| f \|_{L^p(\omega,\Omega)} &= \left[ \int_\Omega \abs{f(x)}^p \omega(x) \dx \right]^{1/p}.
\end{align*}

\begin{corollary}[weighted stability]
\label{cor:LpwStability}
Under the assumptions of Theorem~\ref{theorem:main} let $p \in (1,\infty)$ and $\omega \in \mathcal{A}_p$. Then, 
\[
  \| \nabla R_h u \|_{L^p(\omega,\Omega)} \lesssim \| \nabla u \|_{L^p(\omega,\Omega)},
\]
where the implicit constant is independent of $u$ and $h$.
\end{corollary}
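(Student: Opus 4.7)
The plan is to reduce the statement to a classical mapping property of the Hardy--Littlewood maximal operator. By Theorem~\ref{theorem:main}, for almost every $z \in \Omega$ we have the pointwise bound $|\nabla R_h u(z)| \lesssim M[\nabla u](z)$. Raising this to the $p$-th power, multiplying by $\omega$, and integrating over $\Omega$ yields
\[
  \| \nabla R_h u \|_{L^p(\omega,\Omega)} \lesssim \| M[\nabla u] \|_{L^p(\omega,\Omega)}.
\]
Since $u \in W^{1,1}_0(\Omega)$, the extension of $\nabla u$ by zero to $\RRn$ lies in $L^1(\RRn)$, so $M[\nabla u]$ is well defined on $\RRn$ and the right-hand side majorizes the restriction to $\Omega$.

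The remaining ingredient is the celebrated theorem of Muckenhoupt: for $p \in (1,\infty)$, the condition $\omega \in \mathcal{A}_p$ is equivalent to $M$ being bounded on $L^p(\omega,\RRn)$. This may be invoked from any standard reference, for example \cite[Theorem 7.1.9]{MR3243741}. A small bookkeeping issue is that $\omega$ is a priori defined only on $\Omega$; this is harmless since one may extend $\omega$ to an $\mathcal{A}_p$ weight on all of $\RRn$ (for instance by a reflection across $\partial \Omega$, or simply by setting $\omega \equiv 1$ outside $\Omega$ after observing that such an extension preserves the $\mathcal{A}_p$ constant up to a factor depending only on $\Omega$), and the integrand $|\nabla R_h u|^p$ is supported in $\Omega$ anyway.

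Combining the pointwise estimate with Muckenhoupt's theorem yields
\[
  \| \nabla R_h u \|_{L^p(\omega,\Omega)} \lesssim \| M[\nabla u] \|_{L^p(\omega,\RRn)} \lesssim \| \nabla u \|_{L^p(\omega,\RRn)} = \| \nabla u \|_{L^p(\omega,\Omega)},
\]
with an implicit constant depending only on the shape regularity of $\mathbb{T}$ and on $[\omega]_{\mathcal{A}_p}$. Given how direct the argument is, no genuine obstacle arises; the only mild technicality, as noted, is the extension of the weight, which is standard. The structural payoff is that Theorem~\ref{theorem:main} turns weighted stability into a purely harmonic-analytic question, and for that question the Muckenhoupt range is known to be sharp, which is precisely why this strengthens \cite{DreDurOje20} to the optimal class of weights.
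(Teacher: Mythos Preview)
Your argument is correct and follows exactly the paper's route: invoke the pointwise bound of Theorem~\ref{theorem:main} and then the Muckenhoupt theorem on the boundedness of $M$ in $L^p(\omega)$; the paper's proof is the single sentence citing \cite[Theorem~7.1.9(b)]{MR3243734}. Your additional discussion of extending $\omega$ off $\Omega$ is not in the paper and is unnecessary for the stated result, and the specific claim that setting $\omega\equiv 1$ outside $\Omega$ preserves $[\omega]_{\mathcal{A}_p}$ up to a constant is not true in general (a weight singular at $\partial\Omega$ can fail this), so you should drop that parenthetical.
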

\begin{proof}
It suffices to recall that, provided $\omega \in \mathcal{A}_p$, the Hardy--Littlewood maximal operator is bounded on weighted spaces; see \cite[Theorem 7.1.9(b)]{MR3243734}.
\end{proof}

As we mentioned above, this result generalizes \cite[Corollary 3.3]{DreDurOje20} where such estimate is obtained, but with $\omega \in \mathcal{A}_{p/2}$, a strictly smaller class.

\subsection{Weighted Lorentz spaces}
\label{sub:wLorentz}

Let $\omega$ be a weight. Here we are concerned with weighted Lorentz spaces $L^{p,q}(\omega,\Omega)$, i.e., the measure in \eqref{eq:LorentzNorm} is $\mu = \omega \dx$.
\begin{corollary}[weighted stability]
\label{cor:wLorentz}
In the setting of Theorem~\ref{theorem:main} let $p \in (1,\infty)$, $q \in (1,\infty]$, and $\omega \in \mathcal{A}_p$. Then
\[
  \| \nabla R_h u \|_{L^{p,q}(\omega,\Omega)} \lesssim \| \nabla u \|_{L^{p,q}(\omega,\Omega)},
\]
where the implicit constant is independent of $u$ and $h$.
\end{corollary}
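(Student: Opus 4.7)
The plan is to follow the template already used for Corollaries~\ref{cor:LorentzStability}--\ref{cor:LpwStability}: extract the pointwise bound from Theorem~\ref{theorem:main}, transfer it through the weighted Lorentz quasinorm, and then cite a known boundedness result for the Hardy--Littlewood maximal operator.

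Concretely, Theorem~\ref{theorem:main} gives $|\nabla R_h u(z)| \lesssim M[\nabla u](z)$ for almost every $z \in \Omega$. Because the weighted Lorentz quasinorm is monotone under pointwise domination of nonnegative functions (the distribution function $\mu_f(t) = \omega(\{|f|>t\})$ is), this pointwise estimate immediately upgrades to
\[
  \|\nabla R_h u\|_{L^{p,q}(\omega,\Omega)} \lesssim \|M[\nabla u]\|_{L^{p,q}(\omega,\Omega)}.
\]
The corollary therefore reduces to the boundedness of $M$ on $L^{p,q}(\omega,\Omega)$ under the stated hypotheses on $p$, $q$, and $\omega$.

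For that second step I would cite the Chung--Hunt--Kurtz theorem, which is precisely the statement that $M$ is bounded on $L^{p,q}(\omega)$ whenever $p \in (1,\infty)$, $q \in (1,\infty]$, and $\omega \in \mathcal{A}_p$. A more self-contained alternative uses the self-improvement property of Muckenhoupt weights: there is $\varepsilon>0$ with $\omega \in \mathcal{A}_{p-\varepsilon}$, so Muckenhoupt's theorem (as invoked in Corollary~\ref{cor:LpwStability}) yields strong boundedness of $M$ on $L^{p-\varepsilon}(\omega,\Omega)$ and on $L^{p+1}(\omega,\Omega)$; real interpolation on the Lorentz scale then furnishes the desired $(p,q)$ bound for every $q \in (1,\infty]$, the endpoint $q=\infty$ amounting to Muckenhoupt's weak-type inequality.

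No substantive obstacle appears at this level: all of the real-variable work has been compressed into Theorem~\ref{theorem:main} and into the classical weighted theory of the maximal operator. The only item worth flagging is the restriction $q>1$, which is aligned with the range in which the Chung--Hunt--Kurtz theorem is stated and which the interpolation route also naturally produces.
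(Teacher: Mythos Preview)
Your proposal is correct and follows essentially the same route as the paper: apply the pointwise estimate of Theorem~\ref{theorem:main} and then invoke the known boundedness of the Hardy--Littlewood maximal operator on $L^{p,q}(\omega,\Omega)$ for $\omega\in\mathcal{A}_p$ (the paper cites \cite[Theorem~5.2.1]{Krbec}, which is the same result you attribute to Chung--Hunt--Kurtz). Your additional interpolation argument via the open property of $\mathcal{A}_p$ is a nice self-contained alternative, but not needed.
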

\begin{proof}
  According to \cite[Theorem 5.2.1]{Krbec}, given the range of exponents, we have that $M :L^{p,q}(\omega,\Omega) \to L^{p,q}(\omega,\Omega)$ boundedly if $\omega \in \mathcal{A}_p$.
\end{proof}

\subsection{Weighted variable exponent spaces}
\label{sub:wLpx}
As a final application we mention weighted variable exponent spaces. A variable exponent is $p \in L^0(\Omega)$ such that $p(\Omega) \subset [1,\infty]$. Given a variable exponent and a weight $0 \leq \omega \in L^1_\loc(\Omega)$ we define weighted variable exponent Lebesgue spaces as
\begin{align*}
  L^{p(\cdot)}_\omega(\Omega) &= \left\{ f \in L^0(\Omega) : \| f \|_{L^{p(\cdot)}_\omega(\Omega)} < \infty\right\}, \\
  \| f \|_{L^{p(\cdot)}_\omega(\Omega)} &= \inf_{\lambda>0} \left\{ \int_\Omega \left| \frac1\lambda {f(x)} \omega(x)\right|^{p(x)} \dx \leq 1 \right\}.
\end{align*}
We refer the reader to \cite{MR3026953,MR2790542} for an extensive treatise on these spaces.

Given a variable exponent $p$, we say that $p \in \mathcal{P}^{\log}(\Omega)$ if 
\[
  \left|\frac1{p(x)} - \frac1{p(y)} \right| \lesssim \frac1{\log( \mathrm{e} + 1/\abs{x-y} )}, \quad \forall x,y \in \Omega,
\]
and, there is $p_\infty \geq 1$ such that
\[
  \left|\frac1{p(x)} - \frac1{p_\infty} \right| \lesssim \frac1{\log( \mathrm{e} + 1/\abs{x} )}, \quad \forall x \in \Omega.
\]
If $p$ is a variable exponent, then $p'$ is its H\"older conjugate; that is, the variable exponent that satisfies
\[
  \frac1{p(x)} + \frac1{p'(x)} = 1,
\]
for almost every $x \in \Omega$. We say that the weight $\omega$ satisfies the generalized Muckenhoupt condition, denoted by $\omega \in \mathcal{A}$, if
\[
  \| \chi_Q  \|_{L^{p(\cdot)}_\omega(\Omega)} \| \chi_Q \|_{L^{p'(\cdot)}_{\omega^{-1}}(\Omega)} \approx |Q|,
\]
for every cube $Q$ with sides parallel to the coordinate axes. Here $\chi_Q$ is the characteristic function of $Q$.

\begin{remark}[$\mathcal{A}$ vs.~$\mathcal{A}_p$]
If $p(x) = p \in (1,\infty)$ for all $x\in \Omega$, then it is known that
\[
  \| f \|_{L^{p(\cdot)}_\omega(\Omega)}^p = \int_\Omega |f(x) \omega(x)|^p \dx = \| f \omega \|_{L^p(\Omega)} = \| f \|_{L^p(\omega^p,\Omega)}.
\]
Thus, we see that $\omega\in \mathcal{A}$ is equivalent to $\mu = \omega^{p} \in \mathcal{A}_p$.
\end{remark}

\begin{corollary}[variable exponent stability]
\label{cor:LpxStability}
Under the assumptions of Theorem~\ref{theorem:main} let $p \in \mathcal{P}^{\log}(\Omega)$ with $\essinf_{x \in \Omega} p(x) > 1$, and $\omega \in \mathcal{A}$. Then
\[
  \| \nabla R_h u \|_{L^{p(\cdot)}_\omega(\Omega)} \lesssim \| \nabla u \|_{L^{p(\cdot)}_\omega(\Omega)},
\]
where the implicit constant is independent of $u$ and $h$.
\end{corollary}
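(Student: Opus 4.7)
The strategy mirrors the proofs of the preceding corollaries in this section: reduce the claim to a known mapping property of the Hardy--Littlewood maximal operator on $L^{p(\cdot)}_\omega(\Omega)$, and plug in Theorem~\ref{theorem:main}. Concretely, I would first apply the pointwise estimate \eqref{eq:mainResult}, which gives, for almost every $z \in \Omega$,
\[
  |\nabla R_h u(z)| \lesssim M[\nabla u](z),
\]
with a constant independent of $u$ and $h$. Since $L^{p(\cdot)}_\omega(\Omega)$ is a lattice (if $|f| \leq |g|$ pointwise a.e.\ then $\| f\|_{L^{p(\cdot)}_\omega(\Omega)} \leq \| g\|_{L^{p(\cdot)}_\omega(\Omega)}$), taking norms in this space yields
\[
  \| \nabla R_h u \|_{L^{p(\cdot)}_\omega(\Omega)} \lesssim \| M[\nabla u]\|_{L^{p(\cdot)}_\omega(\Omega)}.
\]

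The remaining task is to quote the sharp boundedness result for $M$ on weighted variable exponent Lebesgue spaces. Under the standing assumptions $p \in \mathcal{P}^{\log}(\Omega)$ with $\essinf_{x \in \Omega} p(x) > 1$, together with the generalized Muckenhoupt condition $\omega \in \mathcal{A}$, it is known that $M : L^{p(\cdot)}_\omega(\Omega) \to L^{p(\cdot)}_\omega(\Omega)$ is bounded. This is precisely the weighted extension of the Diening--R\r{u}\v{z}i\v{c}ka theorem and can be extracted from the monograph \cite{MR3026953} (see also \cite{MR2790542}), where the class $\mathcal{A}$ is shown to be the correct analogue of the classical Muckenhoupt class in this setting, in particular reducing to $\mu = \omega^p \in \mathcal{A}_p$ when $p$ is constant (as already noted in the remark preceding the statement). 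Combining this boundedness with the previous display yields the desired estimate.

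The main obstacle is not analytical but bibliographic: one must verify that the cited reference covers the weighted log--H\"older case on a bounded domain with the decay condition at infinity playing no role once we extend by zero, and that the constant in the maximal inequality depends only on $p$, $\omega$, and $\Omega$, not on $u$ or $h$. Once this is confirmed, no further work is required, and the proof consists of the two displayed inequalities together with the citation.
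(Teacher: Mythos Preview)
Your proposal is correct and follows essentially the same approach as the paper: apply the pointwise bound of Theorem~\ref{theorem:main} and then invoke the boundedness of the Hardy--Littlewood maximal operator on $L^{p(\cdot)}_\omega(\Omega)$ under the hypotheses $p \in \mathcal{P}^{\log}(\Omega)$, $\essinf p > 1$, and $\omega \in \mathcal{A}$. The paper's proof is a one-line citation (to \cite{MR2837636} and \cite[Theorems~4.3.8 and~5.8.6]{MR2790542}) for that boundedness, so your version is simply a more spelled-out variant of the same argument.
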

\begin{proof}
Under the given assumptions the Hardy--Littlewood maximal operator is bounded on $L^{p(\cdot)}_\omega(\Omega)$; see \cite{MR2837636}, \cite[Theorem 4.3.8]{MR2790542} and \cite[Theorem 5.8.6]{MR2790542}.
\end{proof}

\subsection{Other extensions and variations}
As we mentioned after Theorem~\ref{theorem:main}, the list we have provided is not exhaustive. For instance, under certain conditions, one can also assert the boundedness in Orlicz--Musielak spaces \cite{MR3811530}.

On the other hand, there are some spaces where the stability remains open. Notable examples are $\mathcal{H}^1(\Omega)$, the atomic Hardy space, and $L^1(\Omega)$.

\section{Proof of the main result}
\label{sec:proof-main-result}

We now focus on the proof of Theorem~\ref{theorem:main}. The technique that we shall follow will be a combination of weighted norm inequalities as in \cite{RanSco82}, and local estimates, as presented in \cite{DemLeySchWah12}. We shall also rely on some estimates on the Green's function that hold, for $n \in \{2,3\}$, in convex polytopes.

\begin{proposition}[Green's function estimates]
\label{proposition:HolderOnGreen}
Let $\Omega \subset \RRn$, with $n \in \{2,3\}$, be a convex polytope and $G : \overline\Omega \times \overline\Omega \to \RR$ be the Green's function associated to this domain. Then, for every $i \in \{1, \ldots, n\}$,
\[
  |\partial_{x_i} G(x,\xi) | \lesssim \frac1{|x-\xi|^{n-1}}, \qquad \forall x,\xi \in \overline\Omega.
\]
In addition, there is $\alpha \in (0,1]$, depending only on the inner angles of $\Omega$, such that, for every $i,j \in \{1, \ldots, n\}$ and all $x,y,\xi \in \Omega$, we have
\begin{align*}
   \frac{\left|\partial_{x_i} G(x,\xi) - \partial_{y_i}G(y,\xi)\right|}{\abs{x-y}^\alpha} &\lesssim \abs{x-\xi}^{-n-\alpha+1} + \abs{y-\xi}^{-n+\alpha+1}, \\
   \frac{\left|\partial_{x_i}\partial_{\xi_j} G(x,\xi) - \partial_{y_i}\partial_{\xi_j}G(y,\xi)\right|}{\abs{x-y}^\alpha} &\lesssim \abs{x-\xi}^{-n-\alpha} + \abs{y-\xi}^{-n+\alpha}.
\end{align*}
\end{proposition}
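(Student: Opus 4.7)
The plan is to decompose the Green's function as $G(x,\xi) = \Phi(x-\xi) - H(x,\xi)$, where $\Phi$ is the fundamental solution of $-\Delta$ on $\RRn$ and, for each fixed $\xi \in \Omega$, the corrector $H(\cdot,\xi)$ solves $\Delta_x H(x,\xi) = 0$ in $\Omega$ with boundary data equal to the restriction of $\Phi(\cdot-\xi)$ to $\partial\Omega$. Direct calculation on $\Phi$ yields $|\nabla_x \Phi(x-\xi)| \lesssim |x-\xi|^{-(n-1)}$, $|\partial_{x_i}\partial_{\xi_j} \Phi(x-\xi)| \lesssim |x-\xi|^{-n}$, and, via the mean value theorem, Hölder differences of these quantities with any exponent $\alpha \in (0,1)$ whose scaling matches the right-hand sides in the claim. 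Thus the entire argument reduces to proving analogous bounds for the corrector $H$.

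The gradient bound on $H$ I would obtain by a Grüter--Widman argument: the maximum principle on the convex (hence Lipschitz) domain $\Omega$ controls $H$ pointwise by its boundary data, which are in turn controlled by $\Phi(\cdot-\xi)$; an interior gradient estimate on a ball of radius comparable to $\tfrac12 \min(|x-\xi|, \dist(x,\partial\Omega))$, combined with a boundary barrier argument in the regime $\dist(x,\partial\Omega)\ll |x-\xi|$, converts this into $|\nabla_x H(x,\xi)| \lesssim |x-\xi|^{-(n-1)}$. For the two Hölder statements I would exploit the symmetry $G(x,\xi) = G(\xi,x)$: differentiating it in $\xi$ shows that $\partial_{\xi_j} G(\cdot, \xi)$ is again a harmonic function of $x \in \Omega \setminus \{\xi\}$ with vanishing Dirichlet trace on $\partial\Omega$, so both the gradient-Hölder estimate and the mixed-derivative Hölder estimate reduce to a single quantitative $C^{1,\alpha}$ boundary-regularity statement for harmonic functions on $\Omega$ with Hölder boundary data.

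Producing this $C^{1,\alpha}$ regularity with an explicit $\alpha>0$ is the main obstacle. Away from vertices and edges of $\Omega$, standard boundary Schauder on a half-space allows any $\alpha < 1$. Near a vertex or edge, however, the regularity is governed by Kondratiev--Maz'ya--Rossmann corner asymptotics: the leading singular exponent is the smallest positive eigenvalue $\lambda_v$ of the Dirichlet Laplace--Beltrami operator on the cross-section at that corner, and convexity forces $\lambda_v > 1$ at every $v$. Any $\alpha \in (0, \min_v(\lambda_v - 1))$ then yields a Hölder exponent depending only on the inner angles of $\Omega$, as claimed. The passage from this $C^{1,\alpha}$ bound to the two-point estimates in the statement is a dyadic annular decomposition in $|x-y|$ versus $\min(|x-\xi|,|y-\xi|)$: in the small-separation regime one rescales a ball of radius $\sim |x-\xi|$ away from the diagonal and applies $C^{1,\alpha}$; in the opposite regime one separates the two difference terms and invokes the previously established pointwise bounds. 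In practice, the authors most likely short-circuit this machinery by citing directly the regularity literature for Dirichlet problems in convex polyhedra.
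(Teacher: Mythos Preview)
Your outline is mathematically sound and, as you yourself anticipated in the last sentence, the paper does exactly what you predict: it gives no argument at all and simply cites the literature. Specifically, the gradient bound is attributed to Gr\"uter--Widman for $n\ge 3$ and to Fromm for $n=2$; the mixed-derivative H\"older estimate is quoted from Guzm\'an--Leykekhman--Rossmann--Schatz ($n=3$) and Dr\u{a}ghici--Dur\'an--Ojea ($n=2$); and the first-derivative H\"older estimate is said to follow ``the same proof'' as in the first of these references.

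What you have written is therefore not a different proof but rather a sketch of the mathematics sitting behind those citations: the Gr\"uter--Widman barrier/maximum-principle machinery for the pointwise gradient bound, and the Kondrat'ev--Maz'ya--Rossmann corner analysis (convexity $\Rightarrow$ leading singular exponent $>1$ at every vertex/edge) for the $C^{1,\alpha}$ regularity that drives the two H\"older inequalities. Your reduction via the symmetry $G(x,\xi)=G(\xi,x)$ and the dyadic near/far splitting in $\abs{x-y}$ versus $\min(\abs{x-\xi},\abs{y-\xi})$ is the standard way these two-point estimates are actually derived in the cited works. One small slip of phrasing: for both H\"older statements the relevant harmonic function ($G(\cdot,\xi)$ or $\partial_{\xi_j}G(\cdot,\xi)$) has \emph{zero} Dirichlet trace, not merely H\"older trace; the H\"older data enters only if one works with the corrector $H$ instead. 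Either route is fine.

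In short: your proposal is correct and strictly more informative than the paper's own proof, which is a list of references.
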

\begin{proof}
The first bound can be found in \cite[Theorem 3.3(iv)]{GruWid82} for $n\geq 3$, and \cite[Proposition 1 (9)]{MR1156467} for $n=2$.

In the case $n=3$, the H\"older estimates on the first and mixed derivatives can be found in \cite[formula (1.4)]{GuzLeyRosSch09}. When $n=2$ \cite[Lemma 2.1]{DreDurOje20} presents a proof of the estimate for the mixed derivative. The estimate on the first derivative follows the same proof presented in \cite{GuzLeyRosSch09}.
\end{proof}

Notice that the H\"older estimates on derivatives of $G$ is the only moment where our dimensional restriction plays a role. As soon as the estimates in Proposition~\ref{proposition:HolderOnGreen} are valid for more dimensions, the proof of Theorem~\ref{theorem:main} follows \emph{verbatim}.

\subsection{Approximation of identity}
The technique of weighted norms \cite{MR474884,MR568857} relies on the construction of a regularized distance function and its properties. Here we rephrase some of the properties of such function that may help elucidate the reason for its use. For $K,\gamma >0$ to be chosen we define $\varphi_1 : \RRn \to \RR$ by
\begin{equation}
\label{eq:def_phi}
  \varphi_1(x) = c_1 \left( \abs{x}^2 + K^2 \right)^{-\tfrac{n+\gamma}2},
\end{equation}
where $c_1$ is such that $\int_\Omega \varphi_1(x) \dx = 1$. Now, for $\epsilon>0$ and $z \in \Omega$, we define
\begin{align*}
  \varphi_\epsilon(x) &= \epsilon^{-n}\varphi_1(x/\epsilon) = c_1 \epsilon^{-n} \left( \abs{x/\epsilon}^2 + K^2 \right)^{-\tfrac{n+\gamma}2} = c_1 \epsilon^\gamma \left( \abs{x}^2 + K^2\epsilon^2 \right)^{-\tfrac{n+\gamma}2}, \\
  \varphi_{\epsilon,z}(x) &= \varphi_\epsilon(z-x).
\end{align*}
Notice that the family $\{\varphi_\epsilon \}_{\epsilon>0}$ is an approximation of the identity.

\begin{lemma}[convolution estimate]
\label{lemma:convolution}
For every $\epsilon>0$, $z \in \Omega$, and $f \in L^0(\Omega)$ we have
\[
  \| \varphi_{\epsilon,z} f \|_{L^1(\Omega)} = \left( \varphi_\epsilon * |f| \right)(z) \lesssim M[f](z),
\]
where the constant is independent of $\epsilon$, $z$, and $f$.
\end{lemma}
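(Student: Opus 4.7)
The equality $\|\varphi_{\epsilon,z} f\|_{L^1(\Omega)} = (\varphi_\epsilon * |f|)(z)$ is immediate from the definition $\varphi_{\epsilon,z}(x) = \varphi_\epsilon(z-x)$, together with the zero extension convention for $f$ outside $\Omega$. So the content of the lemma is the pointwise bound of this convolution by $M[f](z)$. This is a standard fact about approximations of identity by radially decreasing kernels; the specific form of $\varphi_\epsilon$ makes a direct dyadic proof very clean.

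My plan is to split the integral into the central ball and dyadic annuli around $z$. Set $B_0 = B(z,K\epsilon)$ and $A_j = B(z, 2^j K \epsilon) \setminus B(z, 2^{j-1} K \epsilon)$ for $j \geq 1$. On $B_0$ we use the trivial bound
\[
  \varphi_\epsilon(z-x) = c_1 \epsilon^\gamma (|z-x|^2 + K^2\epsilon^2)^{-(n+\gamma)/2} \leq c_1 K^{-(n+\gamma)} \epsilon^{-n},
\]
so that $\int_{B_0}\varphi_\epsilon(z-x)|f(x)|\dx \lesssim \epsilon^{-n} \int_{B_0} |f| \lesssim M[f](z)$ by the very definition of $M$. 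On each annulus $A_j$ we have $|z-x| \geq 2^{j-1} K \epsilon$, so
\[
  \varphi_\epsilon(z-x) \lesssim \epsilon^\gamma (2^{j-1} K\epsilon)^{-(n+\gamma)} = 2^{-j(n+\gamma)} K^{-(n+\gamma)} 2^{n+\gamma}\,\epsilon^{-n},
\]
while $\int_{A_j} |f(x)|\dx \leq |B(z, 2^j K\epsilon)| M[f](z) \lesssim (2^j K\epsilon)^n M[f](z)$. Multiplying these two estimates yields
\[
  \int_{A_j} \varphi_\epsilon(z-x) |f(x)|\dx \lesssim 2^{-j\gamma} M[f](z).
\]

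Summing over $j \geq 1$ gives a geometric series that converges precisely because $\gamma > 0$, contributing a finite multiple of $M[f](z)$, which added to the central-ball term proves the claim. There is no real obstacle here; the only thing to keep track of is that the implicit constant depends on $K$ and $\gamma$ but not on $\epsilon$, $z$, or $f$, which is transparent in the above chain of inequalities.
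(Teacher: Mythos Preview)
Your proof is correct. The paper's own proof is a one-line citation: it observes that $\varphi_1$ is radial and decreasing and invokes the classical result that convolution with any $L^1$-normalized radially decreasing kernel is pointwise dominated by the maximal function (Stein, \emph{Singular Integrals}, \S2.2, Theorem~2.2; also Grafakos, Theorem~2.1.10). What you have written is precisely the standard dyadic-annulus proof of that cited theorem, specialized to the explicit kernel $\varphi_\epsilon$. So the two approaches are not really different in spirit---yours simply unpacks the black box. The benefit of your version is that it is self-contained and makes the role of the decay exponent $\gamma>0$ explicit (it is exactly what makes the geometric series $\sum_j 2^{-j\gamma}$ converge); the paper's version is shorter and places the estimate in its natural general context. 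Either is acceptable here.
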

\begin{proof}
Since the function $\varphi_1$ is radial and decreasing, it suffices to invoke Theorem 2.2 of Section 2.2 in \cite{Stein1970}, see also \cite[Theorem 2.1.10]{MR3243734}.
\end{proof}

\subsection{Regularized Green's function}
To establish our main estimate we shall rely on a pointwise representation. We fix $h>0$ and let $z \in \Omega$ be such that $z \in \mathring{T}$ for some $T \in \tria_h$. Owing to shape regularity, there is a function $\delta_z \in C_0^\infty(T)$ such that
\[
  \int_T \delta_z(x) P(x)\dx = P(z), \quad \forall P \in \mathbb{P}_k, \qquad \| D^m \delta_z \|_{L^\infty(\Omega)} \lesssim h^{-n-m}, \quad m \in \setN_0.
\]
Fix $l \in \{1, \ldots, n\}$. The regularized Green's function is $g_z \in W^{1,2}_0(\Omega)$ such that
\begin{equation}
\label{eq:def:reg:green}
  \skp{ \nabla g_z}{\nabla v}_{L^2(\Omega)} = \skp{\delta_z}{\partial_l v }_{L^2(\Omega)}, \quad \forall v \in W^{1,2}_0(\Omega).
\end{equation}

Owing to the fact that the right hand side in \eqref{eq:def:reg:green} is compactly supported in $\Omega$ we can, using Proposition~\ref{proposition:HolderOnGreen}, obtain some H\"older regularity for $g_z$. This is the content of the following result.

\begin{proposition}[estimates on $g_z$]
\label{proposition:Holdergz}
Let $z \in \mathring{T} \in \tria_h$ and $g_z$ solve \eqref{eq:def:reg:green}. Then, for every $i \in \{1,\ldots, n\}$ and all $x,y \notin T$, $x \neq y$, we have
\[
  \frac{ |\partial_{i} g_z(x) - \partial_i g_z(y)|}{|x-y|^\alpha} \lesssim \max_{\xi \in T} \left( |x-\xi|^{-n-\alpha} + |y-\xi|^{-n-\alpha} \right),
\]
where the exponent $\alpha \in (0,1]$ is the same as in Proposition~\ref{proposition:HolderOnGreen}. Moreover,
\[
  \| \nabla g_z \|_{L^\infty(\Omega)} \lesssim h^{-n}.
\]
\end{proposition}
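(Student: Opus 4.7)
The plan is to pass to a Green's function representation of $g_z$ and then leverage the pointwise estimates from Proposition~\ref{proposition:HolderOnGreen} together with the size and localization properties of $\delta_z$. Recognizing that the weak formulation \eqref{eq:def:reg:green} is the distributional statement $-\Delta g_z = -\partial_l \delta_z$ in $\Omega$ with homogeneous Dirichlet data, I would begin by writing
\[
  g_z(x) = -\int_\Omega G(x,\xi)\,\partial_{\xi_l}\delta_z(\xi)\diff \xi = \int_T \partial_{\xi_l}G(x,\xi)\,\delta_z(\xi)\diff \xi,
\]
where the second identity follows by an integration by parts in $\xi$, the boundary terms vanishing because $\delta_z \in C_0^\infty(T)$. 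The two facts about $\delta_z$ that will be used are $\|\delta_z\|_{L^\infty(\Omega)} \lesssim h^{-n}$ (so that $\|\delta_z\|_{L^1(\Omega)} \lesssim 1$ since $|T| \lesssim h^n$) and $\|\partial_{\xi_l}\delta_z\|_{L^\infty(\Omega)} \lesssim h^{-n-1}$.

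For the global $L^\infty$ bound on $\nabla g_z$, I would differentiate the first representation under the integral sign,
\[
  \partial_i g_z(x) = -\int_T \partial_{x_i}G(x,\xi)\,\partial_{\xi_l}\delta_z(\xi)\diff \xi,
\]
and invoke the first-derivative bound $|\partial_{x_i}G(x,\xi)| \lesssim |x-\xi|^{1-n}$ from Proposition~\ref{proposition:HolderOnGreen}. Since $|x-\xi|^{1-n}$ is locally integrable and the worst case is $x\in T \subset B(x,Ch)$, one has $\int_T |x-\xi|^{1-n}\diff\xi \lesssim h$; for $x$ far from $T$ the integral is even smaller. Combined with the $L^\infty$ bound on $\partial_{\xi_l}\delta_z$, this yields $|\partial_i g_z(x)| \lesssim h^{-n-1}\cdot h = h^{-n}$ uniformly in $x$.

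For the H\"older estimate, observe that when $x \notin T$ the kernel $\xi \mapsto \partial_{x_i}\partial_{\xi_l} G(x,\xi)$ is continuous on $\supp\delta_z \subset T$, so one may differentiate the second representation directly,
\[
  \partial_i g_z(x) = \int_T \partial_{x_i}\partial_{\xi_l}G(x,\xi)\,\delta_z(\xi)\diff \xi.
\]
Subtracting the analogous expression at $y\notin T$ and inserting the mixed-derivative H\"older bound from Proposition~\ref{proposition:HolderOnGreen} gives
\[
  |\partial_i g_z(x) - \partial_i g_z(y)| \lesssim |x-y|^\alpha \int_T \bigl(|x-\xi|^{-n-\alpha} + |y-\xi|^{-n+\alpha}\bigr)|\delta_z(\xi)|\diff \xi,
\]
and the inner integral is controlled via $\|\delta_z\|_{L^1(\Omega)}\lesssim 1$ by the maxima of the two kernels over $\xi \in T$. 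The subtle point, and the step I expect to require the most care, is that Proposition~\ref{proposition:HolderOnGreen} produces an \emph{asymmetric} bound, with the weaker exponent $-n+\alpha$ on the $y$ side; this contribution must be absorbed into $\max_{\xi\in T}|y-\xi|^{-n-\alpha}$ by using boundedness of $\Omega$ (so that $|y-\xi|^{2\alpha}\lesssim 1$) to recover the symmetric form asserted in the proposition.
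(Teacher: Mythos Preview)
Your argument is correct and follows the paper's proof essentially verbatim: both use the Green's function representation $\partial_i g_z(x)=-\int_T \partial_{x_i}G(x,\xi)\,\partial_l\delta_z(\xi)\diff\xi$, integrate by parts to shift $\partial_l$ onto $G$, and then apply the mixed-derivative H\"older bound together with $\|\delta_z\|_{L^1(T)}\lesssim 1$ for the first estimate, while the $L^\infty$ bound on $\nabla g_z$ comes from $|\partial_{x_i}G(x,\xi)|\lesssim|x-\xi|^{1-n}$, $\|\partial_l\delta_z\|_{L^\infty}\lesssim h^{-n-1}$, and $\int_T|x-\xi|^{1-n}\diff\xi\lesssim h$. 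Your remark on the asymmetric exponent $-n+\alpha$ is a fair catch; the paper simply writes the symmetric form without comment, and your fix via $|y-\xi|^{2\alpha}\lesssim(\diameter\Omega)^{2\alpha}$ is the right way to reconcile the two if one takes the stated exponent at face value.
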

\begin{proof}
We begin by using the pointwise representation of $g_z$ in terms of the Green's function $G$, and the fact that $\delta_z$ is supported on $T$ to obtain
\[
  \partial_i g_z(x) - \partial_i g_z(y) = - \int_T \left( \partial_{x_i} G(x,\xi) - \partial_{y_i} G(y,\xi) \right) \partial_l \delta_z(\xi) \diff \xi.
\]
We now invoke Proposition~\ref{proposition:HolderOnGreen} to obtain
\begin{align*}
  \frac{ |\partial_{i} g_z(x) - \partial_i g_z(y)|}{|x-y|^\alpha} &\leq \sup_{\xi \in T} \frac{ |\partial_{\xi_l}\partial_{x_i} G(x,\xi) - \partial_{\xi_l} \partial_{y_i} G(y,\xi)|}{|x-y|^\alpha} \| \delta_z \|_{L^1(T)} \\
  &\lesssim \max_{\xi \in T} \left( |x-\xi|^{-n-\alpha} + |y-\xi|^{-n-\alpha} \right),
\end{align*}
as claimed.

To obtain the second estimate we observe that $\delta_z$ is supported on $T$ and use its scaling properties to assert that, for any $i \in \{1, \ldots, n \}$, we have
\begin{align*}
  |\partial_i g_z(x) | = \left| \int_T \partial_{x_i} G(x,\xi) \partial_l \delta_z(\xi) \diff \xi \right| \lesssim \int_T |x-\xi|^{1-n} h^{-n-1} \diff \xi \lesssim h^{-n}.
\end{align*}

All estimates have been proved.
\end{proof}

\newcounter{Paso}
\stepcounter{Paso}
\subsection{Step \thePaso: Pointwise representation}

We now begin with the proof of Theorem~\ref{theorem:main} \emph{per se}. Owing to the properties of $\delta_z$ we have that
\begin{align*}
  \partial_l R_h u(z) &= \skp{\delta_z}{\partial_l R_h u}_{L^2(\Omega)} = \skp{\nabla g_z }{\nabla R_h u}_{L^2(\Omega)} =\skp{\nabla R_h g_z }{\nabla u}_{L^2(\Omega)} \\
  &=  \skp{\delta_z}{\partial_l u}_{L^2(\Omega)}+ \skp{\nabla(R_h g_z - g_z)}{\nabla u}_{L^2(\Omega)}
\end{align*}
where we used \eqref{eq:def:reg:green} and the definition of the Ritz projection \eqref{eq:def_ritz}. From the definition of $\delta_z$ it follows immediately that
\[
  \left|\skp{\delta_z}{\partial_l u}_{L^2(\Omega)} \right| \lesssim M[\nabla u](z).
\]
On the other hand, we estimate the second term as
\[
  \left| \skp{\nabla(R_h g_z - g_z)}{\nabla u}_{L^2(\Omega)}\right| \leq \left\| \varphi_{h,z} \nabla u \right\|_{L^1(\Omega)} \left\| \varphi_{h,z}^{-1} \nabla(R_h g_z - g_z) \right\|_{L^\infty(\Omega)}.
\]
Owing to Lemma~\ref{lemma:convolution},
\[
  \left\| \varphi_{h,z} \nabla u \right\|_{L^1(\Omega)} \lesssim M[\nabla u](z).
\]
Thus, if we define
\begin{equation}
\label{eq:def_Mh}
  \Maxgz = \sup_{z \in \Omega} \left\| \varphi_{h,z}^{-1} \nabla(R_h g_z - g_z) \right\|_{L^\infty(\Omega)},
\end{equation}
we see that the heart of the matter is to provide a uniform, in $h$, estimate for this quantity.

In summary, the rest of the proof consists in showing the following result.

\begin{proposition}[uniform estimate]
\label{proposition:UniformMh}
In the setting of Theorem~\ref{theorem:main} there are $K>k_0$ and $\gamma \in (0,\alpha)$ such that, if $\varphi_1$ is defined as in \eqref{eq:def_phi}, we have
\[
  \Maxgz \lesssim 1,
\]
where the constant is independent of $h>0$, and $\Maxgz$ was defined in \eqref{eq:def_Mh}. Here $k_0$ is as in Proposition~\ref{proposition:DemlowSchatz}, and $\alpha$ as in Proposition~\ref{proposition:HolderOnGreen}.
\end{proposition}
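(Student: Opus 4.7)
The plan is to show that, for every $x,z \in \Omega$,
\[
   \bigl|\nabla(R_h g_z - g_z)(x)\bigr| \lesssim \varphi_{h,z}(x),
\]
uniformly in $h$. Writing $e = g_z - R_h g_z$, I would split $\Omega$ into a near region $\{|x-z| \leq K h\}$ and a far region $\{|x-z| > K h\}$, treat each separately, and close the argument through a bootstrap on $\Maxgz$ that is enabled by choosing $K$ large.

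On the near region, where $\varphi_{h,z}(x)$ is comparable to $K^{-n}h^{-n}$, it suffices to show $|\nabla e(x)| \lesssim h^{-n}$. The bound $\|\nabla g_z\|_{L^\infty(\Omega)} \lesssim h^{-n}$ is given by Proposition~\ref{proposition:Holdergz}. The matching bound on $\nabla R_h g_z$ follows from Proposition~\ref{proposition:DemlowSchatz} at the scale $d = k_0 h$: a direct Green's-function computation gives $\|g_z\|_{L^\infty(\Omega)}\lesssim h^{1-n}$, which renders the first two terms there of order $h^{-n}$, while the local $L^2$-error on $B_{k_0 h}(x)$ is bounded by $h^{1-n/2}$ using the same sup-norm bound on $g_z$ and volume scaling. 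Hence the near region contributes to $\Maxgz$ only through a constant depending on $K$.

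The main analytic step is the far region. Fix $x$ with $|x-z| > K h$ and pick $d = c|x-z|$ with $c \in (0,1)$ small enough that $B_d(x) \cap T = \emptyset$ and $d \geq k_0 h$. Applying Proposition~\ref{proposition:DemlowSchatz} with $w_h$ a Scott--Zhang interpolant of $g_z$, the Hölder estimate
\[
   [\nabla g_z]_{C^{0,\alpha}(B_d(x))} \lesssim |x-z|^{-n-\alpha}
\]
from Proposition~\ref{proposition:Holdergz} makes the first two error terms of order $h^\alpha |x-z|^{-n-\alpha}$. Since $\gamma < \alpha$ and $h/|x-z| \leq 1/K$, this quantity can be rewritten as a factor of order $K^{-(\alpha-\gamma)}$ times something of the form of $\varphi_{h,z}(x)$, so it contributes only a small multiple of $\varphi_{h,z}(x)$ when $K$ is large.

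The obstacle is the residual term $d^{-n/2-1}\|e\|_{L^2(B_d(x))}$. To control it I would use the definition of $\Maxgz$ itself, namely $|\nabla e(y)| \leq \Maxgz\,\varphi_{h,z}(y)$ throughout $\Omega$, combined with the zero boundary condition on $e$ and either a duality argument testing against a regularized Poisson problem supported on $B_d(x)$ (exploiting Galerkin orthogonality) or an explicit path integration against $\varphi_{h,z}$ that uses its decay away from $z$. Either route should give $\|e\|_{L^2(B_d(x))} \lesssim K^{-\beta}\Maxgz\,d^{n/2+1}\varphi_{h,z}(x)$ for some $\beta>0$. Collecting all three contributions yields
\[
  \Maxgz \leq C_1 + C_2 K^{-\beta'}\,\Maxgz,
\]
with $\beta' > 0$, and choosing $K$ sufficiently large absorbs the rightmost term and gives $\Maxgz \lesssim 1$. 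The most delicate point is the weighted $L^2$ estimate in this last step: the strict inequality $\gamma < \alpha$ provides precisely the margin needed to close the bootstrap, and is the reason for imposing it.
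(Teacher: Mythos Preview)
Your outline is essentially the paper's proof: the paper's dyadic decomposition is used only at the single annulus $A_\frakj$ where the supremum is attained, which is your near/far split; the interpolation terms are bounded exactly as you say via the H\"older estimate of Proposition~\ref{proposition:Holdergz}; and your duality option for the $L^2$ residual is precisely Lemma~\ref{lemma:DualityBound}, giving the bootstrap $\Maxgz \lesssim C(K) + (K^{-1}+K^{-(\alpha-\gamma)})\Maxgz$.

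Two points deserve care. In the near region, the bound $\|g_z-R_hg_z\|_{L^2(B_{k_0h})}\lesssim h^{1-n/2}$ does not follow from $\|g_z\|_{L^\infty}\lesssim h^{1-n}$ and volume scaling alone, since that says nothing about $R_hg_z$; you need either Aubin--Nitsche combined with $\|\nabla g_z\|_{L^2(\Omega)}\lesssim h^{-n/2}$, or the same duality lemma (which is what the paper does uniformly in $\frakj$). More importantly, your path-integration alternative does \emph{not} yield the required smallness factor. Integrating $|\nabla e|\le \Maxgz\,\varphi_{h,z}$ along the ray from $y$ directed away from $z$ gives only $|e(y)|\lesssim \Maxgz\,h^\gamma |y-z|^{1-n-\gamma}$, hence $d^{-n/2-1}\|e\|_{L^2(B_d(x))}\lesssim C\,\Maxgz\,\varphi_{h,z}(x)$ with a constant $C$ independent of $K$; this cannot be absorbed. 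The duality route is not optional here: Galerkin orthogonality is what buys the extra factor $h$ (via $H^2$ regularity of $w_v$ near the support of $v$) or $h^\alpha$ (via the H\"older estimate on $\partial_x G$ on the complement), and it is exactly this gain that becomes $K^{-\beta}$.
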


\stepcounter{Paso}
\subsection{Step \thePaso: Dyadic decomposition}

Fix $z \in \Omega$. Define, for $j \in \setN_0$, $d_j = 2^j K h$. We decompose the domain $\Omega$ into the following annuli
\begin{equation}
\label{eq:annuli}
  \begin{aligned}
    B_j &= \left\{ x \in \Omega: |x-z| < d_j \right\}, & A_j &= B_j \setminus B_{j-1}, \\
    A_j^+ &= B_{j+1} \setminus B_{j-2}, & A_j^{++} &= B_{j+2} \setminus B_{j-3},
  \end{aligned}
\end{equation}
with the convention that, for $j<0$, $B_j = \emptyset$. For $S \subset \overline\Omega$ we also define
\[
  \mathcal{N}_h(S) = \bigcup \left\{ T\in \tria_h : S \cap T \neq \emptyset \right\}.
\]
The use of this dyadic decomposition lies on the fact that on each annulus the regularized distance function $\varphi_{h,z}$ is almost constant.

\begin{lemma}[distance estimates]
\label{lemma:distance}
Assume that $K>2$. For all $j\geq0$ we have
\[
  \phi_{h,z}(x) \approx h^\gamma d_j^{-n-\gamma}, \qquad \forall x\in A_j, 
\]
and
\[
  \dist\left(A_j^+, \mathcal{N}_h(\Omega\setminus A_j^{++}) \right) \approx d_j,
\]
where the implicit constants are independent of $h$. As a consequence, for $j \geq 3$ we have,
\[
  \left| \nabla g_z \right|_{C^{0,\alpha}(A_j^{++})} \lesssim d_j^{-n-\alpha},
\]
where $\alpha \in (0,1]$ is as in Proposition~\ref{proposition:HolderOnGreen}.
\end{lemma}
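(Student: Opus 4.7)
The three claims are essentially book-keeping exercises, so my plan is to unpack the definitions of $\varphi_{h,z}$ and of the annuli $A_j, A_j^+, A_j^{++}$, and then invoke Proposition~\ref{proposition:Holdergz} for the last claim. For the first estimate, I would just plug into the formula
\[
  \varphi_{h,z}(x) = c_1 h^\gamma \bigl( |x-z|^2 + K^2 h^2 \bigr)^{-(n+\gamma)/2}.
\]
For $x \in A_j$ with $j \geq 1$ we have $|x-z| \in [d_{j-1}, d_j)$, so $|x-z|^2 + K^2 h^2 \approx d_j^2$ (using $K > 2$ to rule out the trivial degeneracy); for $j = 0$, $|x-z| < Kh = d_0$ forces $|x-z|^2 + K^2 h^2 \approx K^2 h^2 = d_0^2$. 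Either way one obtains $\varphi_{h,z}(x) \approx h^\gamma d_j^{-n-\gamma}$.

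For the second claim, I would use $\Omega \setminus A_j^{++} = (\Omega \setminus B_{j+2}) \cup B_{j-3}$ and apply the triangle inequality. A point $x \in A_j^+$ satisfies $d_{j-2} \leq |x-z| < d_{j+1}$, so for any $y \in \Omega \setminus A_j^{++}$ we get $|x-y| \geq \min(d_{j+2} - d_{j+1}, d_{j-2} - d_{j-3}) = \min(d_{j+1}, d_{j-3}) \gtrsim d_j$ (the min being $d_{j-3} = d_j/8$ for $j\geq 3$; for $j<3$ only the outer bound $d_{j+1}$ is in play). Inflating to $\mathcal{N}_h(\Omega \setminus A_j^{++})$ adds at most a constant multiple of $h$ to the estimate, which is absorbed into the constants by choosing $K$ sufficiently large, since $h \ll d_j$ uniformly in $j$. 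The matching upper bound $\lesssim d_j$ is immediate: one can exhibit pairs of points straddling the boundary $\partial B_{j+1}$, whose separation from the corresponding piece of $\Omega\setminus B_{j+2}$ is exactly of order $d_j$.

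For the third claim, my plan is to apply Proposition~\ref{proposition:Holdergz} with $S = A_j^{++}$. The hypothesis $x,y \notin T$ must be checked: since $z \in T$ and $\diam(T) \lesssim h$, we have $T \subset B(z, Ch)$; for $j \geq 3$ and $K$ large enough, $d_{j-3} \geq Kh > Ch$, so $T \cap A_j^{++} = \emptyset$. For $x,y \in A_j^{++}$ and $\xi \in T$, the same computation gives $|x-\xi|, |y-\xi| \gtrsim d_{j-3} \approx d_j$, so Proposition~\ref{proposition:Holdergz} yields
\[
  \frac{|\partial_i g_z(x) - \partial_i g_z(y)|}{|x-y|^\alpha} \lesssim d_j^{-n-\alpha},
\]
as desired. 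The only non-routine ingredient is choosing $K$ large enough so that all the ``$+Ch$'' corrections arising from $\mathcal{N}_h$ and from the support of $\delta_z$ are dominated by the dyadic radii $d_j$; this is really the only subtlety, but it is harmless since $K$ has not yet been fixed and Proposition~\ref{proposition:UniformMh} already anticipates that we will take $K > k_0$.
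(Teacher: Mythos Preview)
Your proposal is correct and matches the paper's approach; the paper dispatches the first two claims with ``follows by definition'' and proves the third exactly as you do, via Proposition~\ref{proposition:Holdergz} after observing that $|x-\xi|,|y-\xi|\approx d_j$ for $x,y\in A_j^{++}$ and $\xi\in T$. One small remark: your hedges ``by choosing $K$ sufficiently large'' are unnecessary, since $K>2$ already suffices---with $h$ the mesh size, $\mathcal{N}_h$ inflates by at most $h$, and $d_{j-3}-h = d_j\bigl(\tfrac18 - \tfrac{1}{2^jK}\bigr) \geq \tfrac{d_j}{16}$ for $j\geq 3$, while $T\subset B(z,h)\subset B_0$ needs only $K>1$.
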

\begin{proof}
The estimate on $\phi_{h,z}$ follows by definition. The estimate on the distance between $A_j^+$ and $\mathcal{N}_h(\Omega\setminus A_j^{++})$ does so as well.

On the other hand, if $j \geq 3$, $x,y \in A_j^{++}$, and $\xi \in T$ then $\abs{x-\xi}, \abs{y-\xi} \approx d_j$. We can then refine the estimate of Proposition~\ref{proposition:Holdergz} to conclude 
\[
  \frac{ |\partial_{i} g_z(x) - \partial_i g_z(y)|}{|x-y|^\alpha} \lesssim d_j^{-n-\alpha}. %\qedhere
\]
\end{proof}

\stepcounter{Paso}
\subsection{Step \thePaso: Reduction to interpolation and duality}

Let $\frakj \in \setN_0$ now be such that
\[
  \Maxgz = \left\| \varphi_{h,z}^{-1} \nabla(g_z - R_h g_z) \right\|_{L^\infty(A_\frakj)}.
\]
Using the distance estimates of Lemma~\ref{lemma:distance} we can also assert that
\[
  \Maxgz \lesssim h^{-\gamma} d_\frakj^{n+\gamma} \left\| \nabla(g_z - R_h g_z) \right\|_{L^\infty(A_\frakj)}.
\]
Now choose $K \geq k_0$, where $k_0$ was introduced in Proposition~\ref{proposition:DemlowSchatz}. Then we have $\diameter A_\frakj \approx d_j \geq k_0 h$, so that with a simple covering argument we may obtain that
\begin{equation}
\label{eq:MhleqIpIIpIII}
\begin{aligned}
  \Maxgz &\lesssim h^{-\gamma} d_\frakj^{n+\gamma} \left\| \nabla(g_z - R_h g_z) \right\|_{L^\infty(A_\frakj)}  \\
  &\lesssim h^{-\gamma} d_\frakj^{n+\gamma} \left( \| \nabla( g_z - \Pi_h g_z ) \|_{L^\infty(A_\frakj^+)} + d_\frakj^{-1} \| g_z - \Pi_h g_z \|_{L^\infty(A_\frakj^+)} \right. \\ &+ \left. d_\frakj^{-\tfrac{n}2-1} \| g_z - R_h g_z \|_{L^2(A_\frakj^+)} \right) 
  = \mathrm{I} + \mathrm{II} + \mathrm{III},
\end{aligned}
\end{equation}
where $\Pi_h$ is, for instance, the so-called Scott--Zhang interpolant \cite{MR1011446}. The first two terms will be handled using interpolation estimates, whereas the last one is controlled by duality.

\stepcounter{Paso}
\subsection{Step \thePaso: Bound of \texorpdfstring{$\mathrm{I} + \mathrm{II}$}{I + II} via interpolation estimates}
It is our goal now to bound $\mathrm{I} + \mathrm{II}$ using the approximation properties of $\Pi_h$ and the regularity of $g_z$. This regularity, however, depends on the distance between $A_\frakj^+$ and $z$. If $\frakj\geq 3$, then we can invoke the estimate in Lemma~\ref{lemma:distance} to see that
\begin{align*}
  \| g_z - \Pi_h g_z \|_{L^\infty(A_\frakj^+)} + h \| \nabla( g_z - \Pi_h g_z ) \|_{L^\infty(A_\frakj^+)} &\lesssim h^{1+\alpha} |\nabla g_z |_{C^{0,\alpha}(A_\frakj^{++})} \\ &\lesssim h^{1+\alpha} d_\frakj^{-n-\alpha}.
\end{align*}
As a consequence, since $0 < \gamma < \alpha$,
\begin{equation}
\label{eq:BoundIpIIvers1}
\begin{aligned}
  \mathrm{I} + \mathrm{II} &\lesssim h^{-\gamma} d_\frakj^{n+\gamma} \left( h^\alpha d_\frakj^{-n-\alpha} + h^{1+\alpha}d_\frakj^{-n-\alpha-1} \right) 
  \lesssim \left( \frac{h}{d_\frakj} \right)^{\alpha-\gamma} + \left( \frac{h}{d_\frakj} \right)^{1+\alpha-\gamma}\\
  &\leq \frac1{K^{\alpha-\gamma}} + \frac1{K^{1+\alpha-\gamma}}.
\end{aligned}
\end{equation}

If, on the other hand, $\frakj < 3$ we use the second bound of Proposition~\ref{proposition:Holdergz} to obtain
\[
  \| g_z - \Pi_h g_z \|_{L^\infty(A_\frakj^+)} + h\| \nabla( g_z - \Pi_h g_z ) \|_{L^\infty(A_\frakj^+)} \lesssim h \|\nabla g_z \|_{L^\infty(\Omega)} \lesssim h^{1-n}.
\]
In this case then we get 
\begin{equation}
 \label{eq:BoundIpIIvers2}
\begin{aligned}
  \mathrm{I} + \mathrm{II} &\lesssim h^{-\gamma} d_\frakj^{n+\gamma} \left( h^{-n} + d_\frakj^{-1} h^{1-n} \right) \lesssim h^{-n-\gamma}d_3^{n+\gamma} + h^{1-n-\gamma}d_3^{n+\gamma-1} \\
  &\lesssim K^{n+\gamma} + K^{n+\gamma-1}.
\end{aligned}
\end{equation}

Gathering \eqref{eq:BoundIpIIvers1} and \eqref{eq:BoundIpIIvers2} we arrive at
\begin{equation}
\label{eq:BoundIpII}
  \mathrm{I} + \mathrm{II} \lesssim \max\left\{ \frac1{K^{\alpha-\gamma}} + \frac1{K^{1+\alpha-\gamma}}, K^{n+\gamma} + K^{n+\gamma-1} \right\}.
\end{equation}

\stepcounter{Paso}
\subsection{Step \thePaso: Bound of \texorpdfstring{$\mathrm{III}$}{III} by duality}
We bound $\mathrm{III}$ by duality. Define
\begin{equation}
\label{eq:DefSjForDuality}
  \mathcal{S}_\frakj = \left\{v \in C_0^\infty(\Omega) : \| v \|_{L^2(\Omega)} \leq 1, \ \support(v) \subset A_j^+ \right\}
\end{equation}
so that
\[
  \| g_z - R_h g_z \|_{L^2(A_\frakj^+)} = \sup_{0 \neq v \in \mathcal{S}_\frakj} \skp{g_z - R_h g_z}{v}_{L^2(\Omega)}. 
\]

Fix $v \in \mathcal{S}_\frakj$ and let $w_v \in W^{1,2}_0(\Omega)$ solve
\begin{equation}
\label{eq:DefOfwvForDuality}
  -\Delta w_v = v, \text{ in } \Omega, \qquad w_v = 0, \text{ on } \partial\Omega.
\end{equation}
Then, by Galerkin orthogonality,
\begin{align*}
  \skp{g_z - R_h g_z}{v}_{L^2(\Omega)} &= \skp{\nabla(g_z - R_h g_z)}{\nabla w_v}_{L^2(\Omega)} \\
    &= \skp{\nabla(g_z - R_h g_z)}{\nabla (w_v -\Pi_h w_v)}_{L^2(\Omega)} \\
    &= \skp{\varphi_{h,z}^{-1} \nabla(g_z - R_h g_z)}{\varphi_{h,z}\nabla (w_v -\Pi_h w_v)}_{L^2(\Omega)}.
\end{align*}
An application of H\"older's inequality then allows us to conclude that
\[
  \mathrm{III} \leq h^{-\gamma}d_\frakj^{\tfrac{n}2+\gamma-1} \sup_{v \in \mathcal{S}_j} \| \varphi_{h,z}\nabla (w_v -\Pi_h w_v)\|_{L^1(\Omega)} \Maxgz .
\]
Notice that if, in this last estimate, the term that is multiplying $\Maxgz$ is sufficiently small, then it could be absorbed on the left hand side in \eqref{eq:MhleqIpIIpIII}. This possibility is explored in the following result.

\begin{lemma}[duality bound]
\label{lemma:DualityBound}
Let $\mathcal{S}_\frakj$ be defined in \eqref{eq:DefSjForDuality} and $\gamma \in (0,\alpha)$. There is a constant, independent of $\frakj$, $z$, and $h$ such that
\[
  h^{-\gamma}d_\frakj^{\tfrac{n}2+\gamma-1} \sup_{v \in \mathcal{S}_j} \| \varphi_{h,z}\nabla (w_v -\Pi_h w_v)\|_{L^1(\Omega)} \leq C \left( \frac1K + \frac1{K^{\alpha-\gamma}} \right),
\]
where $w_v \in W^{1,2}_0(\Omega)$ is the solution to \eqref{eq:DefOfwvForDuality} and $\alpha$ is as in Proposition~\ref{proposition:HolderOnGreen}.
\end{lemma}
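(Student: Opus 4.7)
The plan is to decompose $\Omega$ along the dyadic annuli $A_k$ of \eqref{eq:annuli}, exploit the fact (Lemma~\ref{lemma:distance}) that $\varphi_{h,z}\approx h^\gamma d_k^{-n-\gamma}$ on $A_k$, and treat separately the annuli that overlap the support $A_\frakj^+$ of $v$ and those that are well separated from it. Concretely, I would write
\[
  \|\varphi_{h,z}\nabla(w_v-\Pi_h w_v)\|_{L^1(\Omega)}
    =\sum_{k\ge 0}\|\varphi_{h,z}\nabla(w_v-\Pi_h w_v)\|_{L^1(A_k)},
\]
pull out the supremum of $\varphi_{h,z}$ on each $A_k$, and in each piece use an appropriate local estimate of the Scott--Zhang interpolation error.

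For the near band $|k-\frakj|\le 3$ I would invoke the convexity of $\Omega$ to get $\|D^2 w_v\|_{L^2(\Omega)}\lesssim\|v\|_{L^2(\Omega)}\le 1$, combine it with the global $H^1$-stability $\|\nabla(w_v-\Pi_h w_v)\|_{L^2(\Omega)}\lesssim h$ of Scott--Zhang, and pass to $L^1(A_k)$ by Cauchy--Schwarz using $|A_k|^{1/2}\lesssim d_\frakj^{n/2}$. After multiplication by the prefactor $h^{-\gamma}d_\frakj^{n/2+\gamma-1}$, the (finitely many) near contributions collapse to a constant multiple of $h/d_\frakj\le 1/K$.

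For the far band I would use the representation $w_v(x)=\int_{A_\frakj^+}G(x,\xi)v(\xi)\diff\xi$. Since $A_k$ sits at distance $\approx d_{\max(k,\frakj)}$ from $A_\frakj^+$, Proposition~\ref{proposition:HolderOnGreen} yields
\[
  |\nabla w_v|_{C^{0,\alpha}(A_k)}
    \lesssim d_{\max(k,\frakj)}^{-n-\alpha+1}\|v\|_{L^1(A_\frakj^+)}
    \lesssim d_{\max(k,\frakj)}^{-n-\alpha+1}d_\frakj^{n/2}.
\]
Using the elementwise interpolation bound $\|\nabla(w_v-\Pi_h w_v)\|_{L^\infty(T)}\lesssim h^\alpha|\nabla w_v|_{C^{0,\alpha}(\mathcal{N}_h(T))}$ together with $|A_k|\lesssim d_k^n$, and then multiplying by the prefactor and writing $d_k=2^{k-\frakj}d_\frakj$, the contribution of $A_k$ becomes a multiple of $h^\alpha d_\frakj^{-\alpha}$ times $2^{-(k-\frakj)(n+\alpha+\gamma-1)}$ when $k>\frakj+3$ and $2^{(\frakj-k)\gamma}$ when $k<\frakj-3$. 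The outer series is summable since $n+\alpha-1>\gamma$; the inner series is initially growing like $2^{\gamma\frakj}$ but is tamed by the global factor $h^\alpha d_\frakj^{-\alpha}=K^{-\alpha}2^{-\alpha\frakj}$, yielding $K^{-\alpha}2^{(\gamma-\alpha)\frakj}\le K^{-\alpha}\le K^{-(\alpha-\gamma)}$ precisely because $\gamma<\alpha$.

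The main obstacle is the inner band: the Hölder seminorm of $\nabla w_v$ does \emph{not} decay as $k$ decreases, because every source point in $A_\frakj^+$ sits at distance $\approx d_\frakj$ from $A_k$. The only surviving decay comes from the weight factor $d_k^{-\gamma}$ produced by $\varphi_{h,z}$, and the hypothesis $\gamma<\alpha$ imposed in Proposition~\ref{proposition:UniformMh} is exactly what converts the resulting compound geometric tail into a bound that is uniform in $h$ and $\frakj$.
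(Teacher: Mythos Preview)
Your argument is correct: the full dyadic decomposition, with the near band handled via $H^2$ regularity and the far bands via the H\"older estimate of Proposition~\ref{proposition:HolderOnGreen}, goes through, and your observation that the inner sum is rescued precisely by $\gamma<\alpha$ is on point. (A minor quibble: the outer series is summable because $n+\alpha+\gamma-1>0$, which for $n\ge 2$ is automatic; the condition you quote is not the relevant one, though the conclusion is unaffected.)

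The paper's proof is shorter because it avoids the dyadic sum over $k$ altogether. It splits $\Omega$ into only two pieces, $A_\frakj^{++}$ and its complement. On $A_\frakj^{++}$ the argument coincides with your near band. On $\Omega\setminus A_\frakj^{++}$ it observes that for every $x$ in a mesh neighborhood of this set and every $\xi\in A_\frakj^+$ one has $|x-\xi|\gtrsim d_\frakj$, so Proposition~\ref{proposition:HolderOnGreen} yields the single uniform bound $|\nabla w_v|_{C^{0,\alpha}(\mathcal{N}_h(\Omega\setminus A_\frakj^{++}))}\lesssim d_\frakj^{1-\alpha-n/2}$ on the whole complement. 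Then, rather than summing over annuli, the paper simply uses the normalization $\|\varphi_{h,z}\|_{L^1(\Omega)}=1$ to pass from the weighted $L^1$ norm to an $L^\infty$ bound on $\nabla(w_v-\Pi_h w_v)$; after the prefactor this gives $(h/d_\frakj)^{\alpha-\gamma}\le K^{-(\alpha-\gamma)}$ in one stroke. Your approach tracks the decay annulus by annulus and sums the resulting geometric tails, which is more laborious but yields the same bound; the paper's trick of exploiting $\|\varphi_{h,z}\|_{L^1}=1$ is what obviates all the dyadic bookkeeping.
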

\begin{proof}
Let $v \in \mathcal{S}_j$ be arbitrary. Using Lemma~\ref{lemma:distance} and scaling we have
\begin{align*}
  h^{-\gamma}d_\frakj^{\tfrac{n}2+\gamma-1} \| \varphi_{h,z}\nabla (w_v -\Pi_h w_v)\|_{L^1(A_\frakj^{++})} &\lesssim d_\frakj^{-\tfrac{n}2-1} \| \nabla (w_v -\Pi_h w_v)\|_{L^1(A_\frakj^{++})} \\ 
  &\leq d_\frakj^{-1} \| \nabla (w_v -\Pi_h w_v)\|_{L^2(A_\frakj^{++})} \\
  &\lesssim \frac{h}{d_\frakj} |w_v|_{W^{2,2}(\Omega)} \lesssim \frac1K \| v \|_{L^2(\Omega)} \leq \frac1K,
\end{align*}
where, since $\Omega$ is convex, we used a regularity estimate on $w_v$.

To control the norm in $\Omega \setminus A_\frakj^{++}$ observe that, owing to the estimates of Proposition~\ref{proposition:HolderOnGreen}, for every $i \in \{1, \ldots, n\}$ we have
\begin{align*}
  \frac{|\partial_i w_v(x) - \partial_iw_v(y)|}{\abs{x-y}^\alpha} &\leq \int_{A_\frakj^+} \frac{|\partial_{x_i} G(x,\xi) - \partial_{y_i} G(y,\xi)|}{\abs{x-y}^\alpha} |v(\xi)| \diff \xi \\
  &\lesssim \max_{\xi \in A_\frakj^+} \left( |x-\xi|^{-n-\alpha+1} + |y-\xi|^{-n-\alpha+1} \right) \int_{A_\frakj^+} |v(\xi)| \diff \xi \\
  &\lesssim d_\frakj^{-n-\alpha+1} d_\frakj^{\tfrac{n}2} \| v \|_{L^2(A_\frakj^+)} \leq d_\frakj^{1-\alpha-\tfrac{n}2},
\end{align*}
where we used the second distance estimate of Lemma~\ref{lemma:distance}. This shows that
\[
  |\nabla w_v |_{C^{0,\alpha}(\mathcal{N}_h(\Omega\setminus A_\frakj^{++}))} \lesssim d_\frakj^{1-\alpha-\tfrac{n}2}.
\]

To shorten notation let $e_w = w_v -\Pi_h w_v$. We use that $\| \varphi_{h,z} \|_{L^1(\Omega)} = 1$ and the recently obtained regularity estimate to proceed as follows:
\begin{align*}
  h^{-\gamma}d_\frakj^{\tfrac{n}2+\gamma-1} \| \varphi_{h,z}\nabla e_w \|_{L^1(\Omega\setminus A_\frakj^{++})} &\leq h^{-\gamma}d_\frakj^{\tfrac{n}2+\gamma-1} \| \nabla e_w\|_{L^\infty(\Omega\setminus A_\frakj^{++})} \\
  &\leq h^{-\gamma}d_\frakj^{\tfrac{n}2+\gamma-1} h^\alpha  d_\frakj^{1-\alpha-\tfrac{n}2} = \left( \frac{h}{d_\frakj} \right)^{\alpha-\gamma} \leq \frac1{K^{\alpha-\gamma}}.
\end{align*}

We combine both bounds to conclude.
\end{proof}

With Lemma~\ref{lemma:DualityBound} at hand we conclude that
\begin{equation}
\label{eq:BoundIII}
  \mathrm{III} \lesssim \left( \frac1K + \frac1{K^{\alpha-\gamma}} \right) \Maxgz.
\end{equation}

\stepcounter{Paso}
\subsection{Step \thePaso: Final step. Gathering all the estimates}
With the aid of \eqref{eq:BoundIpII} and \eqref{eq:BoundIII}, estimate \eqref{eq:MhleqIpIIpIII} reduces to
\[
  \Maxgz \lesssim \max\left\{ \frac1{K^{\alpha-\gamma}} + \frac1{K^{1+\alpha-\gamma}}, K^{n+\gamma} + K^{n+\gamma-1} \right\} + \left(\frac1K + \frac1{K^{\alpha-\gamma}} \right) \Maxgz,
\]
provided $K \geq k_0$, where $k_0$ is defined in Proposition~\ref{proposition:DemlowSchatz}; and $\gamma \in (0, \alpha)$, with $\alpha$ as in Proposition~\ref{proposition:HolderOnGreen}. We can now, if necessary, choose an even bigger $K$ to conclude the proof of Proposition~\ref{proposition:UniformMh} and, as a consequence, that of Theorem~\ref{theorem:main}.
\section*{Acknowledgements}
% The work of ***
The work of AJS is partially supported by NSF grant DMS-2111228.
The main results of this paper were obtained in February 2022 when AJS was visiting Bielefeld University for the conference: ``\emph{Nonlocal Equations: Analysis and Numerics}''. This work was also funded by the Deutsche Forschungsgemeinschaft (DFG, German Research Foundation) - SFB 1283/2 2021 - 317210226.

 \printbibliography
%\bibliographystyle{siamplain}
%\bibliography{References}

\end{document}